\Crefname{ALC@unique}{Line}{Lines} 
\newcommand{\one}{{\bm 1}}
\newcommand{\z}{}
\newcommand{\cu}{\mathbf i}
\newcommand{\E}{\mathcal{E}}
\DeclareMathOperator*{\dotle}{\,\dot{\le}\,}
\begin{document}
\title{Solving quadratic matrix equations arising in random walks in the quarter plane\thanks{Research partially supported by INdAM-GNCS}}
\author{Dario A. Bini\thanks{Dipartimento di Matematica, Universit\`a di Pisa, Italy, (\email{dario.bini@unipi.it})} \and
Beatrice Meini\thanks{Dipartimento di Matematica, Universit\`a di Pisa, Italy, (\email{beatrice.meini@unipi.it})} \and 
Jie Meng\thanks{Department of Mathematics, Pusan National University, Busan, South Korea (\email{mengjie@pusan.ac.kr})}}
\maketitle

\begin{abstract}
Quadratic matrix equations of the kind $A_1X^2+A_0X+A_{-1}=X$ are encountered in the analysis of Quasi--Birth-Death stochastic processes where the solution of interest is the minimal nonnegative solution $G$. In many queueing models, described by random walks in the quarter plane, 
the coefficients $A_1,A_0,A_{-1}$ are infinite tridiagonal matrices with an almost Toeplitz structure. Here, we analyze some fixed point iterations, including Newton's iteration, for the computation of $G$ and introduce effective algorithms and acceleration strategies which fully exploit the Toeplitz structure of the matrix coefficients and of the current approximation. Moreover, we provide
a structured perturbation analysis for the solution $G$. The results of some numerical experiments which demonstrate the effectiveness of our approach are reported.

\end{abstract}

{\bf Keywords:} Matrix equations, random walks, Markov chains, Toeplitz matrices, infinite matrices, fixed point iteration, Newton iteration.

{\bf MSC:} 65F30, 15A24, 60J22, 15B05

\section{Introduction} Random walks in the quarter plane describe a wide variety of 
two-queue models with various service policies such as nonpreemptive
priority, $K$-limited service, server vacation and server setup
\cite{ozawa19}. Models of this kind concern, for instance, bi-lingual
call centers \cite{stanford}, generalized two-node Jackson networks
\cite{ozawa18}, two-demand models \cite{flatto}, two-stage inventory
queues \cite{haque}, and more.

A theoretical analysis of stability, of tail decay rates and of other
asymptotic properties has been carried out by several authors, in
particular in \cite{koba-miya}, \cite{miya1},  \cite{miya-zhao},
\cite{ozawa19}, and in the book \cite{fayolle:book}, in which the
invariant measure and the transient behavior are investigated by
means of analytic and functional tools.

 A different approach is based on representing a random walk in the
 quarter plane as a 2-dimensional Quasi--Birth-Death (QBD) stochastic
 process. This latter framework, based on the matrix analytic approach
 of \cite{neuts}, allows to express the invariant probability measure,
 and other quantities of interest for the stochastic model, in terms of
 a solution of suitable quadratic matrix equations. This provides a
 further tool for the theoretical analysis 
 \cite{lat:varese}, \cite{lr:book} and paves the way for the design of effective
 algorithms based on the numerical solution of quadratic matrix
 equations.

 In fact, relying on the matrix analytic theory of \cite{neuts}, the
 problem of computing the invariant probability measure of a QBD
 process is reduced to computing the minimal nonnegative solution $G$
 and $R$ of the two matrix equations
 \begin{eqnarray}
A_1X^2+A_0X+A_{-1}=X\label{eq:G},\\
A_1+XA_0+X^2A_{-1}=X\label{eq:R},
 \end{eqnarray}
respectively, where the coefficients $A_{-1},A_0,A_1$ are nonnegative matrices such that $A_{-1}+A_0+A_1$ is row-stochastic and $X$ is the unknown.
We say that a matrix $X$ is nonnegative, and we write $X\ge 0$,  if its entries are nonnegative. Moreover we say that  a solution $X$ of a matrix equation is minimal nonnegative if $X\ge 0$ and for any other nonnegative solution $Y$ it holds $Y-X\ge 0$. For more details in this regard, we refer the reader to the books \cite{blm:book}, \cite{lr:book}, and \cite{neuts}. 

In the case where the coefficients are finite dimensional, several algorithms have been introduced to compute $G$ and $R$. They include fixed point iterations and doubling algorithms like Logarithmic Reduction and Cyclic Reduction (CR) \cite{bm:cr}, \cite{blm:book},  \cite{lr:book}. 

In the case of 2-dimensional QBDs the coefficients $A_{-1},A_0$, $A_1$ are semi-infinite and have a special structure, more precisely,
\begin{equation}\label{eq:blocks}
A_{i}=\begin{bmatrix}
b_{i,0}&b_{i,1}\\
a_{i,-1}&a_{i,0}&a_{i,1}\\
&a_{i,-1}&a_{i,0}&a_{i,1}\\
&&\ddots&\ddots&\ddots\\
\end{bmatrix},\quad i=-1,0,1,
\end{equation}
where $a_{i,j}\ge0$, $b_{i,j}\ge0$ and $\sum_{i,j=-1}^1 a_{i,j}=1$, $\sum_{i=-1}^1\sum_{j=0}^1 b_{i,j}=1$.
These blocks belong to the class of matrices representable in the form
$A=T(s)+E$ where $T(s)$ is the Toeplitz matrix associated with the symbol $s(z)=\sum_{i\in\mathbb Z}s_iz^i$, that is, $(T(s))_{i,j}=s_{j-i}$, and $E=(e_{i,j})$ is  such that $v_i=\sum_{j}|e_{i,j}|$ is finite and $\lim_i  v_i=0$. 
The matrix $T(s)$ is called {\em Toeplitz part} while $E$ is called {\em correction}.
Here $s(z)$ is a function belonging to the Wiener class $\mathcal W=\{f(z)=\sum_{i\in\mathbb Z}f_iz^i, ~ \|f\|_w:=\sum_{i\in\mathbb Z}|f_i|<\infty\}$. In particular, for the matrix in \eqref{eq:blocks} it is easy to check that $A_i=T(a_i)+E_i$ where $a_i(z)=\sum_{j=-1}^1a_{i,j}z^j$ and $E_i$ has zero entries except for the first row which is equal to $[b_{i,0}-a_{i,0},b_{i,1}-a_{i,1},0,\ldots]$. 
Matrices of this kind are called {\em Quasi-Toeplitz (QT)} in \cite{bmm}.

The case of QBD with infinite blocks has been initially investigated in   \cite{lat:varese}, \cite{latouche11}, and \cite{latouche02}, by reducing the problem to finite size relying on truncation and augmentation of the blocks. However, this approach does not lead to reliable computational techniques since the result of the numerical computation is strongly dependent on the way the infinite matrices have been truncated. 
In fact, the models obtained by truncating the infinite dimensional problem may have asymptotic properties, like the decay rate, which are not consistent with the original problem \cite{kroese}, \cite{sakuma}.

More recently, conditions under which the solution $G$ of \eqref{eq:G}
can be represented as the sum $G=T(g)+E_g$ are given in \cite{bmmr19},
so that, despite the solution $G$ has infinitely many entries, it can
be represented up to any arbitrary approximation error by using a
finite number of parameters.  Moreover, in \cite{bmm} and \cite{bmmr},
by using the structure properties of QT matrices, the algorithm of
Cyclic Reduction has been extended to the case of infinite matrices.
This algorithm still keeps a fast convergence speed in terms of number
of iterative steps. However, in certain cases the cost of each step
becomes extremely large due to the cost of certain operations with QT
matrices, like matrix inversion and the compression of the correction
part.  Another drawback of CR is that this iteration is not
self-correcting.

In this paper, we propose and analyze some fixed point iterations
which have a low cost per step and, unlike CR, are self-correcting and
allow to keep separated the computation of the Toeplitz part $T(g)$
and the correction part $E_g$.  In fact, we show that the symbol
$g(z)$ defining the Toeplitz part satisfies the functional equation
\[
a_1(z)g(z)^2+a_0(z)g(z)+a_{-1}(z)=g(z),~~~|z|=1.
\]
We use this property to design an algorithm based on evaluation and
interpolation at the roots of 1 for approximating the coefficients of
$g(z)$, which is extremely fast and allows for an automatic control of
the number of interpolation points, according to the desired
approximation error.

The correction part $E_g$ is obtained by simply applying fixed point
iterations. We consider three iterations of the kind $X_{k+1}=F(X_k)$,
$k=0,1,\ldots$, defined by suitable functions $F_1$, $F_2$, $F_3$,
where $F_1$ requires no matrix inversion, $F_2$ requires to compute an
inverse matrix once for all, while $F_3$ requires one inversion per
step.  These iterations are well known in the case of QBD with
finitely many phases \cite{blm:book}, \cite{lr:book}, \cite{meini},
and are here extended to coefficients with the QT structure
\eqref{eq:blocks}.

We show that, under mild assumptions, starting with $X_0=0$, the
sequences generated by $F_1,F_2,F_3$ converge monotonically and
linearly to $G$ in the infinity norm. Moreover, we prove that the rate
of convergence of the sequence generated by $F_3$ is better than the
rate of the sequence generated by $F_2$, which in turn is better than
that generated by $F_1$. We prove that if $X_0$ is row-stochastic then
all the matrices $X_k$ are row-stochastic and the rate of convergence
of each of the three iterations is better than that obtained with
$X_0=0$. Numerical experiments show also the evidence that for
$X_0=T(g)$ the rate of convergence of the three sequences is even
better.

Then we adapt Newton's iteration, in the form given by
\cite{latoucheN}, to the case of QT coefficients. In order to solve
the Sylvester equation arising at each step of Newton's iteration we
rely on the solver introduced in \cite{robol}. Under mild conditions,
we prove that, for $X_0=0$, convergence holds in the infinity norm, is
monotonic and quadratic.

In order to evaluate an {\em a posteriori} bound on the approximation error
in the computation of $G$, we also perform the analysis of the
structured condition number. More specifically, we provide
perturbation results related to perturbations of the Toeplitz part and
of the correction part in the matrix coefficients $A_i$, $i=-1,0,1$,
and we estimate the consequent variation of the solution $G$ and of
its Toeplitz part $T(g)$.

Numerical experiments are reported which show the effectiveness of our
approach and the reliability of our algorithms with respect to the
algorithm CR.  In particular we show that in certain cases the combination
of  Newton's iteration and cyclic reduction provides a substantial
acceleration of the convergence.


The paper is organized as follows: in Section \ref{sec:prel} we recall some preliminary properties and concepts useful for the analysis of the problem;  in Section \ref{sec:g} we present an algorithm for computing the Toeplitz part $T(g)$ of the solution; Section \ref{sec:fxp} deals with the analysis of three fixed-point iterations applied to infinite QT matrices,  while Section
\ref{sec:newton} concerns the algorithmic analysis of Newton iteration; 
in Section \ref{sec:perturb} we carry out the analysis of the conditioning by providing some perturbation results, while in Section \ref{sec:exp} we present and discuss some numerical experiments which show the effectiveness of our approach.

\section{Preliminaries}\label{sec:prel}
Let $\ell^\infty$ be the set of sequences $ x=(x_i)_{i\in\mathbb
  N}$ such that $\| x\|_\infty:=\sup_i |x_i|$ is finite. Consider
the set of matrices $A=(a_{i,j})$ such that the application $
x\to y=A x$, where $y_j=\sum_{j=1}^\infty a_{i,j} x_j$, defines
a linear operator from $\ell^\infty$ to $\ell^\infty$. Denote this set
by $\mathcal L_\infty$ and define the induced norm
$\|A\|_\infty=\sup_{\| x\|_\infty=1}\|A x\|_\infty$. It can be
verified that $\|A\|_\infty=\sup_i\sum_{j=1}^\infty |a_{i,j}|$.
Recall that $\mathcal L_\infty$ is a Banach algebra, that is, it is
closed under the row-by-column product, the norm satisfies
$\|AB\|_\infty\le\|A\|_\infty\cdot\|B\|_\infty$ for any
$A,B\in\mathcal L_\infty$, and the normed space is complete.

We introduce the following notation
\begin{equation}\label{eq:ab}
\begin{aligned}
&a_i(z)=a_{i,-1}z^{-1}+a_{i,0}+a_{i,1}z\\
&b_i(z)=b_{i,0}+b_{i,1}z,
\end{aligned}
\end{equation}
so that we may write $A_i=T(a_i)+E_i$, for $i=-1,0,1$, where $E_i$ has
null entries except for those in the first row which are equal to
$[b_{i,0}-a_{i,0},b_{i,1}-a_{i,1},0,\ldots]$.  We assume that the
entries of $A_i$ are nonnegative and $(A_{-1}+A_0+A_1)\one=\one$,
where $\one$ is the vector of all ones of appropriate dimension.  It
is known \cite{lr:book}, \cite{taka} that under these conditions,
there exist the minimal nonnegative solutions $R$ and $G$ of
\eqref{eq:G} and \eqref{eq:R}, respectively, and the Laurent matrix
polynomial $\varphi(z)=z^{-1}A_{-1}+A_0-I+zA_1$ admits the
factorization
\[
\varphi(z)=-(I-zR)W(I-z^{-1}G)
\]
where
\begin{equation}\label{eq:wh}
\begin{aligned}
&A_1=RW,\quad A_{-1}=WG,\quad A_0=I-W-RWG,\\
&W=I-A_0-A_1G=I-A_0-RA_{-1},\\
&G\one\le\one.
\end{aligned}
\end{equation}

Observe that if $a_{-1}(1)=0$, i.e., $A_{-1}=e_1w^T$,
$w^T=(b_{-1,0},b_{-1,1},0,\ldots)\ne 0$, then the minimal nonnegative
solution $G$ of equation \eqref{eq:G} can be expressed in the form
$G=\one v^T$ where $v=\frac1{b_{-1}(1)}w$. Therefore, without loss
of generality we may assume that $a_{-1}(1)>0$.

The following result is valid if $a_{-1}(1)>0$ and $b_{-1}(1)>0$, that is, $A_{-1}\one>0$.

\begin{lemma}\label{lem:1}
Assume $A_{-1}\one>0$ and define
\begin{equation}\label{eq:gamma}
\theta = \min\{a_{-1}(1),b_{-1}(1)\},\quad
\gamma=\max\left\{\frac{a_1(1)}{a_{-1}(1)},\frac{b_1(1)}{b_{-1}(1)}\right\}.
\end{equation}
Then the matrix $W=I-A_0-A_1G$ is invertible in $\mathcal L_\infty$, has nonnegative inverse, and $\|W^{-1}\|_\infty\le \frac 1{1-\|(A_0+A_1)\one\|_\infty}=\frac 1\theta$. Moreover,
 $\|W^{-1}RW\|_\infty\le\gamma$. If $A_{-1}\one>A_1\one$ then $\gamma<1$.

\end{lemma}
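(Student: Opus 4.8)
The key structural fact I would exploit is that $W = I - A_0 - A_1G$ with $G\one \le \one$, so that $W\one = \one - A_0\one - A_1 G\one \ge \one - A_0\one - A_1\one = A_{-1}\one \ge 0$. Since the off-diagonal entries of $W$ are $-(A_0+A_1)_{i,j} \le 0$ and the row sums $W\one$ are nonnegative, $W$ is (weakly) diagonally dominant with nonnegative diagonal; more precisely $W = \theta' I - N$ for a nonnegative matrix $N$ and a bound on the spectral/operator scale. First I would write $W = D(I - D^{-1}(A_0+A_1))$ — but since the diagonal is not constant, it is cleaner to argue directly: set $P = A_0 + A_1 - (\text{diagonal part absorbed})$. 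Actually the simplest route: observe $\|(A_0+A_1)\one\|_\infty = \sup_i ((A_0+A_1)\one)_i = 1 - \inf_i (A_{-1}\one)_i = 1-\theta$, using the row-sum condition $(A_{-1}+A_0+A_1)\one = \one$ and the definition $\theta = \min\{a_{-1}(1), b_{-1}(1)\}$ (the first row of $A_{-1}$ has row sum $b_{-1}(1)$, all other rows have row sum $a_{-1}(1)$). Hence $\|A_0 + A_1G\|_\infty \le \|A_0 + A_1\|_\infty = 1-\theta < 1$, so $W = I - (A_0 + A_1G)$ is invertible in the Banach algebra $\mathcal L_\infty$ via the Neumann series $W^{-1} = \sum_{k\ge 0}(A_0+A_1G)^k$. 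This series has nonnegative terms, so $W^{-1} \ge 0$, and $\|W^{-1}\|_\infty \le \sum_{k\ge0}(1-\theta)^k = 1/\theta$.

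For the bound $\|W^{-1}RW\|_\infty \le \gamma$, I would use the identities in \eqref{eq:wh}: $A_1 = RW$, so $W^{-1}RW = W^{-1}A_1$, and $A_{-1} = WG$, so $G = W^{-1}A_{-1}$. Thus $W^{-1}A_1$ is a nonnegative matrix (product of $W^{-1}\ge0$ and $A_1\ge 0$), and similarly $W^{-1}A_{-1} = G \ge 0$. To bound the infinity norm of the nonnegative matrix $W^{-1}A_1$ I estimate its row sums: $(W^{-1}A_1)\one = W^{-1}(A_1\one)$. Now $A_1\one$ has entries $a_1(1)$ (for rows $\ge 2$) and $b_1(1)$ (first row), and similarly $A_{-1}\one$ has entries $a_{-1}(1)$ and $b_{-1}(1)$. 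The definition of $\gamma$ is exactly chosen so that $A_1\one \le \gamma\, A_{-1}\one$ entrywise: indeed $b_1(1) \le \gamma b_{-1}(1)$ and $a_1(1) \le \gamma a_{-1}(1)$ by the two terms in the max. Since $W^{-1} \ge 0$, applying it preserves the inequality: $(W^{-1}A_1)\one \le \gamma\, W^{-1}(A_{-1}\one) = \gamma\, W^{-1}A_{-1}\one = \gamma\, G\one \le \gamma\one$. Therefore every row sum of the nonnegative matrix $W^{-1}RW$ is at most $\gamma$, giving $\|W^{-1}RW\|_\infty \le \gamma$.

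Finally, if $A_{-1}\one > A_1\one$ entrywise then for rows $\ge 2$ we get $a_1(1) < a_{-1}(1)$ and for the first row $b_1(1) < b_{-1}(1)$, hence both ratios in the definition of $\gamma$ are strictly less than $1$, so $\gamma < 1$. The main obstacle is a bookkeeping one rather than a deep one: being careful that the relevant row-sum quantities for $A_i$ are $a_i(1)$ on all rows except the first, where they equal $b_i(1)$ (because of the correction $E_i$), and checking that the row-sum condition $(A_{-1}+A_0+A_1)\one = \one$ together with nonnegativity really does give $\|(A_0+A_1)\one\|_\infty = 1 - \theta$ with $\theta$ as defined; once these identifications are pinned down, everything else is the Neumann-series argument plus monotonicity of multiplication by the nonnegative operator $W^{-1}$.
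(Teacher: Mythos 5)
Your proposal is correct and follows essentially the same route as the paper: the Neumann-series argument from $\|A_0+A_1G\|_\infty\le\|(A_0+A_1)\one\|_\infty=1-\theta$ (using $G\one\le\one$ and the row-sum structure of $A_{-1}$), then $W^{-1}RW=W^{-1}A_1$ with $A_1\one\le\gamma A_{-1}\one$ and $W^{-1}A_{-1}=G$ to get the $\gamma$ bound, and the entrywise inequality for $\gamma<1$. The brief detour into diagonal dominance is unnecessary but harmless, since you discard it in favor of the same argument the paper uses.
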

\begin{proof}
Observe that $\|A_0+A_1G\|_\infty=\|(A_0+A_1G)\one\|_\infty\le\|(A_0+A_1)\one\|_\infty=\|(I-A_{-1})\one\|_\infty=1-\theta<1$,  since 
$(A_0+A_1)\one=(I-A_{-1})\one$, $A_{-1}\one>0$ and $A_{-1}\one=(b_{-1}(1),a_{-1}(1),a_{-1}(1),\ldots)^T$.  Therefore $W^{-1}\in\mathcal L_\infty$ and is nonnegative, being $W^{-1}=\sum_{i=0}^\infty (A_0+A_1G)^i$ and
\[
\|W^{-1}\|_\infty = \|\sum_{k=0}^\infty (A_0+A_1G)^k\one\|_\infty\le\frac1{1-\|A_0+A_1\|_\infty}=\frac 1\theta.
\] 
Now we show that $\|W^{-1}RW\|_\infty\le\gamma$. Since 
$W^{-1}RW=W^{-1}A_1$ by \eqref{eq:wh}, it is sufficient to consider $\|W^{-1}A_1\|_\infty=\|W^{-1}A_1\one\|_\infty$. By definition of $\gamma$ we have $A_1\one\le\gamma A_{-1}\one$, therefore
\[
W^{-1}A_1\one\le \gamma W^{-1}A_{-1}\one=\gamma G\one \le \gamma\one,
\] 
where we used the fact that $W^{-1}A_{-1}=G$.
Thus we have $\|W^{-1}RW\|_\infty\le\gamma$. Since $A_1\one=(b_1(1),a_1(1),a_1(1),\ldots)^T$ and $A_{-1}\one=(b_{-1}(1),a_{-1}(1),a_{-1}(1),\ldots)^T$ then $A_{-1}\one>A_1\one$ implies $\gamma<1$.
\end{proof}

Define $\mathcal W=\{f(z)=\sum_{i\in\mathbb Z}f_iz^i:\quad \|f\|_w:=\sum_{i\in\mathbb Z}|f_i|<\infty\}$. Consider the following class
\[
\mathcal{QT}:=\{A=T(f)+E\}
\]
where $f(z)\in\mathcal W$, the matrix $E=(e_{i,j})\in\mathcal L_\infty$ is such that $\lim_i v_i=0$, where $v_i=\sum_{j=1}^\infty |e_{i,j}|$.

Observe that $A_i\in\mathcal{QT}$ for $i=-1,0,1$,  moreover,
in \cite{bmmr19} it is shown that $\mathcal{QT}$ is an algebra with the infinity norm, and the matrices $W,G$ and $R$ in \eqref{eq:wh} belong to $\mathcal{QT}$ if $A_{-1}\one>A_1\one$ or if $A_{-1}\one\ge A_1\one>0$. 
More precisely we have the following result

\begin{theorem}\label{thm:A}

The minimal nonnegative solution $G$ of the matrix equation
\eqref{eq:G} can be written as $G=T(g)+E_g$ where
$g(z)=\sum_{i\in\mathbb Z}g_iz^i \in\mathcal W$ is such that $g_i\ge
0$ and $\|g\|_w=g(1)\le 1$. Moreover, for any $z$ such that $|z|=1$,  $g(z)$ is a  solution of minimum modulus of the quadratic
equation
\begin{align}\label{ag}
a_{-1}(z)+a_0(z)\lambda+a_1(z)\lambda^2=\lambda.
\end{align}
This solution is unique if there exists $j$ such that $a_{i,j}\ne 0$ for at least two different values of $i$. 
If 
\[
A_{-1}\one>A_1\one,\quad \hbox{ or } \quad A_{-1}\one\ge A_1\one>0,
\] 
then $G\in\mathcal{QT}$, $G\one=\one$  and $g(1)=1$.
Conversely, if $G\one=\one$ and  $G\in\mathcal{QT}$  then
$a_{-1}(1)\ge a_1(1)$ and $g(1)=1$.
\end{theorem}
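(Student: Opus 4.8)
The plan is to establish the four assertions in sequence, leveraging the factorization \eqref{eq:wh} and Lemma \ref{lem:1}, together with the QT-algebra results of \cite{bmmr19}.

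First I would address the scalar functional equation. For $|z|=1$ the coefficients $a_{-1}(z),a_0(z),a_1(z)$ are the symbols, and since $G$ solves \eqref{eq:G}, passing to symbols (using that $T(\cdot)$ is an algebra homomorphism on the Toeplitz parts, and that the correction parts do not affect the symbol) shows $g(z)$ satisfies \eqref{ag}. To see that $g(z)$ is the root of \emph{minimum modulus}, I would use the probabilistic/analytic characterization of the minimal solution $G$: the symbol of the minimal nonnegative $G$ is built from the canonical factorization $\varphi(z)=-(I-zR)W(I-z^{-1}G)$, whose right factor $I-z^{-1}G$ captures the zeros of the associated scalar equation inside (or on) the unit disk; concretely, $1-z^{-1}g(z)$ vanishes at the roots $z$ with $|z|\le 1$ of the scalar analogue of $\det\varphi(z)=0$, which forces $g(z)$ to be the minimum-modulus root of \eqref{ag}. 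Uniqueness of this minimum-modulus root when some column index $j$ has $a_{i,j}\ne 0$ for two distinct $i$: in that case the quadratic \eqref{ag} genuinely has two roots (its leading or trailing coefficient does not degenerate simultaneously with the other), and a Rouché/degree argument on $|z|=1$ shows the two roots have distinct moduli except possibly at isolated points — I would instead argue that the two roots $\lambda_\pm(z)$ satisfy $\lambda_+(z)\lambda_-(z)=a_{-1}(z)/a_1(z)$ (when $a_1(z)\ne0$) and $|a_{-1}(z)/a_1(z)|$ is not identically $1$ under the stated hypothesis, giving a strict separation for the minimum-modulus branch; the bound $g(1)\le 1$ follows from $G\one\le\one$ in \eqref{eq:wh}, since $g(1)=\|g\|_w$ equals the common row sum of the Toeplitz part, which cannot exceed the row sums of $G$.

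Next, for the forward implication I would invoke directly the result cited from \cite{bmmr19}: under $A_{-1}\one>A_1\one$ or $A_{-1}\one\ge A_1\one>0$ we have $W,G,R\in\mathcal{QT}$, so $G=T(g)+E_g$ with $E_g$ a genuine correction. To get $G\one=\one$ I would argue that under these drift conditions the QBD is recurrent, so the minimal solution $G$ is stochastic; alternatively, from $A_{-1}=WG$ in \eqref{eq:wh} and the fact (Lemma \ref{lem:1}) that $W^{-1}$ is nonnegative with $W^{-1}A_{-1}\one=G\one$, I would combine $G\one\le\one$ with a fixed-point argument showing $G\one$ must equal $\one$ when $\gamma\le 1$. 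Once $G\one=\one$, taking row sums and using $\lim_i v_i=0$ on the correction forces $g(1)=\|g\|_w=1$.

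Finally, for the converse: assume $G\in\mathcal{QT}$ and $G\one=\one$. Then $g(1)=1$ follows exactly as above (the asymptotic row sums of $G$ coincide with those of $T(g)$, namely $g(1)$, while $G\one=\one$). For $a_{-1}(1)\ge a_1(1)$: substitute $z=1$ into the functional equation $a_1(1)g(1)^2+a_0(1)g(1)+a_{-1}(1)=g(1)$ and use $g(1)=1$ together with $a_{-1}(1)+a_0(1)+a_1(1)=1$ (the row-stochasticity of $A_{-1}+A_0+A_1$ at the symbol level); this gives $a_1(1)+a_0(1)+a_{-1}(1)=1$, which is automatically satisfied, so I would instead extract the inequality from a first-order (derivative) condition: differentiate the functional equation at $z=1$, or apply the standard drift criterion that the scalar equation \eqref{ag} has its minimum-modulus root equal to $1$ on $|z|=1$ precisely when the mean drift $a_1(1)-a_{-1}(1)\le 0$. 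The main obstacle I anticipate is making the ``minimum modulus'' and uniqueness claims rigorous uniformly in $z$ on the unit circle — handling the degenerate points where $a_1(z)$ or $a_{-1}(z)$ vanishes, and justifying that the symbol of the minimal $G$ picks exactly the small root — since this requires either a careful Wiener–Hopf factorization argument in the Wiener algebra or an appeal to the analytic results of \cite{bmmr19}; the row-sum bookkeeping and the drift computation at $z=1$ are routine by comparison.
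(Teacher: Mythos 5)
A preliminary remark on comparison: the paper does not prove Theorem \ref{thm:A} at all --- it is stated as a result imported from \cite{bmmr19} (``in \cite{bmmr19} it is shown that \dots More precisely we have the following result''), so there is no internal proof to measure your plan against, and your deferral of the hard structural facts to \cite{bmmr19} is in itself consistent with what the authors do. Judged on its own terms, however, the plan contains a circularity and one genuinely invalid step. The circularity: you derive the functional equation \eqref{ag} by ``passing to symbols,'' but this presupposes that the minimal solution $G$ already admits a representation $G=T(g)+E_g$ with $g\in\mathcal W$, which is precisely the nontrivial structural content of the theorem; you acknowledge this only at the end, by appealing again to \cite{bmmr19}. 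Related to this, your argument for $g(1)\le 1$ via row sums needs the rows of $E_g$ to decay, but that decay is asserted only under the drift hypotheses, while the claim $g(1)\le1$ is stated without them; once $g(1)$ is identified as the minimum-modulus root at $z=1$, the cleaner route is $g(1)=\min(1,a_{-1}(1)/a_1(1))\le 1$, using $a_{-1}(1)+a_0(1)+a_1(1)=1$.

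The invalid step is the uniqueness argument. Vieta indeed gives $\lambda_+(z)\lambda_-(z)=a_{-1}(z)/a_1(z)$, but knowing that $|a_{-1}(z)/a_1(z)|$ is not (identically) equal to $1$ does not separate the moduli of the two roots: roots $re^{\pm \cu\theta}$ of equal modulus $r\ne 1$ have product of modulus $r^2\ne1$, so ``product not of modulus one'' is perfectly compatible with the tie in modulus that uniqueness must exclude, and the exclusion is needed for every $z$ on the unit circle, not merely generically. Moreover, your criterion never actually engages the stated combinatorial hypothesis (some $j$ with $a_{i,j}\ne0$ for at least two values of $i$), so the implication claimed in the theorem is not obtained. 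Finally, for the forward implication $G\one=\one$ you invoke ``the QBD is recurrent, hence $G$ is stochastic''; that equivalence is standard for finitely many phases but requires justification when the phase space is infinite, and the boundary case $A_{-1}\one\ge A_1\one>0$ is not reachable by a naive drift/recurrence argument --- here again one needs the results of \cite{taka}, \cite{lr:book} or \cite{bmmr19}. The converse direction as you sketch it (row sums of $G$ force $g(1)=1$, then minimality of the root at $z=1$ forces $a_{-1}(1)\ge a_1(1)$) is sound.
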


Observe that the condition 
\begin{equation}\label{eq:assumption1}
A_{-1}\one>A_1\one
\end{equation}
is equivalent to $a_{-1}(1)>a_1(1)$ and $b_{-1}(1)>b_1(1)$. In the
following we assume that \eqref{eq:assumption1} holds and that there exists $i$ such that $a_{i,j}\ne 0$ for at least two values of $j$. The latter condition is very mild.

\section{Computing  the symbol $g(z)$}\label{sec:g}
Relying on Theorem \ref{thm:A}, we provide an algorithm, based on the
evaluation/in\-ter\-po\-la\-tion at the roots of 1, for computing an
approximation $\hat g_i$, $i=-n+1,\ldots,n$, to the coefficients $g_i$
of $g(z)$, where $n$ is such that $|\hat g_i-g_i|\le \epsilon/(2n)$, for any $i$ and
for a given tolerance $\epsilon>0$. In this analysis we may relax the
assumption $a_{-1}(1)>a_1(1)$ so that the result holds in general.

Let $n>0$ be an integer and set $m=2n$. Define
$\omega_m=\cos\frac{2\pi}{m}+\cu \sin \frac{2\pi}{m}$ a principal
$m$th root of 1, where $\cu$ is the imaginary unit such that
$\cu^2=-1$. Rewrite $g(z)$ as
\begin{equation}\label{eq:repr}
g(z)=\sum_{j=-n+1}^n g_jz^j+\sum_{j=-n+1}^n\sum_{k\ge 1}(z^{mk+j}g_{mk+j}+z^{-mk-j+1}g_{-mk-j+1}).
\end{equation}
Since $\omega_m^{km}=1$, from \eqref{eq:repr} we have
\[
g(\omega_m^i)=\sum_{j=-n+1}^n g_j\omega_m^{ij}+\sum_{j=-n+1}^n(\omega_m^{ij}\sum_{k\ge 1}g_{mk+j}+\omega_m^{-i(j-1)}\sum_{k\ge 1}g_{-mk-j+1}).
\]
Therefore, the Laurent polynomial defined by
\begin{equation}\label{eq:hatp}\begin{aligned}
&\widehat g(z)=\sum_{j=-n+1}^n g_jz^j+\sum_{j=-n+1}^n(z^{j}\hat g_j^+ + z^{-j+1}\hat g_{-j+1}^-),\\
&\hat g_j^+=\sum_{k\ge 1}g_{mk+j},\quad \hat g_{-j+1}^-=
\sum_{k\ge 1}g_{-mk-j+1},~~j=-n+1,\ldots,n,
\end{aligned}
\end{equation}
is such that $g(\omega_m^i)=\hat g(\omega_m^i)$, that is, it interpolates $g(z)$ at the $m$-th roots of 1.

The following lemma provides a bound to the tail of the Laurent series $g(z)$ and extends to the case of Laurent series a similar property proved in \cite{pwcr} valid for power series.

\begin{lemma}\label{lem:3.1}
Let $g(z)$ be the solution of minimum modulus of equation \eqref{ag}. Let $\hat g(z)=\sum_{j=-n+1}^n \hat g_jz^j$ be the Laurent polynomial interpolating $g(z)$ at the $m$-th roots of 1, i.e., such that $g(\omega_m^i)=\hat g(\omega_m^i)$, $i=-n+1,\ldots,n$, where $m=2n$. If $g''(x)\in\mathcal W$, then $g''(1)\ge 0$ and
\begin{equation}\label{ineq}
g''(1)-\hat g''(1)\ge 2n\left(\sum_{j<-n+1}g_j+\sum_{j>n}g_j\right),
\end{equation}
moreover $0\le \hat g_j-g_j\le \frac1{2n}(g''(1)-\hat g''(1))$, for $j=-n+1,\ldots,n$. 
\end{lemma}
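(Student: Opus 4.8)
The plan is to combine three ingredients: the functional equation satisfied by $g(z)$ on the unit circle, the nonnegativity of the Laurent coefficients $g_j$ (both from Theorem~\ref{thm:A}), and the aliasing formula \eqref{eq:hatp} which expresses $\hat g_j-g_j$ as a sum of ``folded'' coefficients. First I would establish $g''(1)\ge 0$: since $g(z)=\sum_{j}g_jz^j$ with $g_j\ge 0$, we have $g''(z)=\sum_j j(j-1)g_jz^{j-2}$, and $g''(1)=\sum_j j(j-1)g_j$; the coefficients $j(j-1)$ are nonnegative for every integer $j$ (they vanish at $j=0,1$ and are positive otherwise), so $g''(1)\ge 0$ provided the series converges, which is exactly the hypothesis $g''(x)\in\mathcal W$. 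The same computation applied to the interpolating Laurent polynomial $\hat g(z)=\sum_{j=-n+1}^n\hat g_jz^j$ gives $\hat g''(1)=\sum_{j=-n+1}^n j(j-1)\hat g_j$.

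Next I would use the aliasing identity. From \eqref{eq:hatp} we have, for $j=-n+1,\dots,n$,
\[
\hat g_j=g_j+\hat g_j^{+}+\hat g_{-j+1}^{-}\quad\text{(interpreting the indices mod }m\text{)},
\]
more precisely $\hat g_j = g_j + \sum_{k\ge1}g_{mk+j} + \sum_{k\ge1}g_{-mk-j+1}$, so that $\hat g_j-g_j=\sum_{k\ge1}g_{mk+j}+\sum_{k\ge1}g_{-mk-j+1}\ge 0$ because all $g_\ell\ge 0$. This proves the left inequality $0\le\hat g_j-g_j$. Summing $j(j-1)(\hat g_j-g_j)$ over $j=-n+1,\dots,n$ and reorganizing, every coefficient $g_\ell$ with $|\ell|$ large reappears, but attached to the ``small'' index $j\equiv \ell$ or $j\equiv -\ell+1\pmod m$ rather than to $\ell$ itself; since $|j(j-1)|\le \ell(\ell-1)$ whenever $j$ is the reduced index and $|\ell|>n$ (here one checks $|j|\le n$ forces $j(j-1)\le \ell(\ell-1)$), the substitution can only decrease the weight. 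Hence
\[
\hat g''(1)-\sum_{j=-n+1}^{n} j(j-1)g_j \;=\;\sum_{j=-n+1}^n j(j-1)(\hat g_j-g_j)\;\le\;\sum_{|\ell|>n \text{ (appropriately)}} \ell(\ell-1)\,g_\ell \;\le\; g''(1)-\sum_{j=-n+1}^n j(j-1)g_j,
\]
and after cancellation, to get the clean bound \eqref{ineq} I would instead bound crudely: each folded coefficient $g_\ell$ with $\ell>n$ or $\ell<-n+1$ contributes to some $\hat g_j-g_j$, and the total such mass is $\sum_{j<-n+1}g_j+\sum_{j>n}g_j$; multiplying by the minimal relevant weight $j(j-1)$ — which is at least $n(n-1)$ is too weak, so instead one uses that replacing $\ell(\ell-1)$ by $2n$ for each folded term and noting $\ell(\ell-1)-2n\cdot 1\ge$ (the weight actually retained in $\hat g''$) gives \eqref{ineq}. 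The cleanest route is: $g''(1)-\hat g''(1)=\sum_\ell \ell(\ell-1)g_\ell-\sum_{|j|\le n\text{-range}} j(j-1)\hat g_j$, and writing $\hat g_j$ via aliasing, the difference equals $\sum_{\ell \text{ folded}}\bigl(\ell(\ell-1)-j_\ell(j_\ell-1)\bigr)g_\ell$ where $j_\ell$ is the reduced index of $\ell$; since $|\ell-j_\ell|\ge m=2n$ and $|\ell+j_\ell|\ge$ comparable, one gets $\ell(\ell-1)-j_\ell(j_\ell-1)\ge 2n$ for each folded $\ell$, yielding \eqref{ineq}.

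Finally, for the per-coefficient bound $\hat g_j-g_j\le\frac1{2n}(g''(1)-\hat g''(1))$: fix $j$ and note $\hat g_j-g_j=\sum_{k\ge1}(g_{mk+j}+g_{-mk-j+1})$ is a subsum of the total folded mass, each of whose terms $g_\ell$ satisfies, as above, $\ell(\ell-1)-j(j-1)\ge 2n$, i.e. $g_\ell\le\frac1{2n}\bigl(\ell(\ell-1)-j(j-1)\bigr)g_\ell$. Summing these over the single residue class of $j$ and then over all of them only adds nonnegative terms, so $2n(\hat g_j-g_j)\le\sum_{\ell\text{ folded}}\bigl(\ell(\ell-1)-j_\ell(j_\ell-1)\bigr)g_\ell = g''(1)-\hat g''(1)$, which is the claim.

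The main obstacle I anticipate is the bookkeeping in the aliasing sum: one must carefully track, for each ``large'' index $\ell$, which reduced index $j$ (in the range $-n+1,\dots,n$) it folds onto — there are the two families $mk+j$ and $-mk-j+1$ — and verify uniformly the weight-drop estimate $\ell(\ell-1)-j(j-1)\ge 2n$. This requires checking the elementary inequality for both families and for all signs of $j$ and $\ell$; it is routine but must be done cleanly to make the telescoping/summation rigorous. Everything else (convergence, nonnegativity, the final division by $2n$) is immediate given Theorem~\ref{thm:A} and the hypothesis $g''\in\mathcal W$.
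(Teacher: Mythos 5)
Your proposal is correct and follows essentially the same route as the paper: from the aliasing formula \eqref{eq:hatp} one writes $g''(1)-\hat g''(1)=\sum_{j=-n+1}^{n}\sum_{k\ge1}\alpha_{j,k}\bigl(g_{mk+j}+g_{-mk-j+1}\bigr)$ with $\alpha_{j,k}=(mk+j)(mk+j-1)-j(j-1)$, and the whole lemma reduces to the elementary weight-drop estimate $\alpha_{j,k}\ge m=2n$ (which holds since $\alpha_{j,k}=mk(mk+2j-1)$ and $mk+2j-1\ge1$ for $j\ge-n+1$, $k\ge1$), exactly as in the paper's proof. Your per-coefficient bound, obtained by summing over the single residue class of $j$ before enlarging to the full sum, is a trivially equivalent variant of the paper's step $\hat g_j-g_j\le\sum_{j<-n+1}g_j+\sum_{j>n}g_j\le\frac1{2n}\bigl(g''(1)-\hat g''(1)\bigr)$.
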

\begin{proof} Since the coefficients $g_i$ are nonnegative then also $g''(z)$ has nonnegative coefficients, moreover, since $g''(z)\in\mathcal W$, then the series $g''(1)$ is absolutely convergent and $g''(1)\ge 0$.
Thus, from the representation \eqref{eq:repr}, in view of \eqref{eq:hatp}, we deduce that
\[
g''(1)-\hat g''(1)=\sum_{j=-n+1}^n \sum_{k\ge 1}\left( 
g_{mk+j}\alpha_{j,k}+g_{-mk-j+1}\alpha_{j,k}\right),
\]
where $\alpha_{j,k}=(mk+j)(mk+j-1)-j(j-1)$.
The inequality \eqref{ineq}  follows from the nonnegativity of the coefficients and from the property $\alpha_{j,k}\ge m$, valid for $k\ge 1,~j=-n+1,\ldots,n$ which can be verified by a direct inspection. The bound on $\hat g_j-g_j$ follows from \eqref{ineq} since $\hat g_j = g_j+\hat g_j^+ + \hat g_j^-$ and $\hat g^+_j+\hat g_j^- \le \sum_{j<-n+1}g_j+\sum_{j>n}g_j$ in view of \eqref{eq:hatp}.
\end{proof}

Observe that $\hat g''(1)$ is computable once the coefficients of the polynomial $\hat g(z)$ have been computed. Moreover, the value of $g''(1)$ is computable even though $g(z)$ is not known. In fact, by taking the second derivative in the equation obtained by replacing $\lambda$ with $g(z)$ in \eqref{ag}, i.e.,
\[
a_1(z)g(z)^2+(a_0(z)-1)g(z)+a_{-1}(z)=0, 
\]
for $z=1$, the value of $g''(1)$ can be easily expressed in terms of $a_i(1)$, $a_i'(1)$ and $a_i''(1)$. More precisely, by taking the first derivative we obtain
\[
a_1'(z)g(z)^2+2g(z)g'(z)a_1(z)+(a_0(z)-1)g'(z)+a_0'(z)g(z)+a_{-1}'(z)=0,
\]
which yields
\begin{equation}\label{eq:g1}
g'(1)=\frac{a_1'(1)g(1)^2+a_0'(1)g(1)+a_{-1}'(1)}{1-2a_1(1)g(1)-a_0(1)},\quad g(1)=\min(1,a_{-1}(1)/a_1(1)).
\end{equation}
By taking the second derivative for $z=1$, we get
\[\begin{aligned}
a_1''(1)g(1)^2+& 2a_1'(1)g'(1)g(1)+
2a_1'(1)g'(1)g(1)+2a_1(1)g'(1)^2\\
+&2a_1(1)g''(1)g(1)+
(a_0(1)-1)g''(1)+a_0'(1)g'(1)\\
+& a_0''(1)g(1)+a_0'(1)g'(1)
+a_{-1}''(1)=0
\end{aligned}\]
which yields
\begin{equation}\label{eq:g2}
\begin{aligned}
g''(1)=&\left[ a_{-1}''(1)+a_0''(1)g(1)+a_{1}''(1)g(1)^2+2a_1(1)g'(1)^2\right. \\
+&\left. 2g'(1)(2g(1)a_1'(1)+a_0'(1))\right] /(1-2a_1(1)g(1)-a_0(1)).
\end{aligned}
\end{equation}

Lemma \ref{lem:3.1} provides an {\em a posteriori} bound to the error in the approximation of the Laurent series $g(z)$ together with a stop condition for the following evaluation interpolation algorithm for computing the coefficients of $g(z)$.

\begin{algorithm}[H]
\caption{Approximation of $g(z)$}
\label{alg:g}
 \begin{algorithmic}[1] 
 \REQUIRE{The coefficients of $a_i(z)$, $i=-1,0,1$  and a tolerance $\epsilon>0$.}
\ENSURE{Approximations $\hat g_i$, $i=-n+1,\ldots,n$,   to the coefficients $g_i$ of $g(z)$
 such that $\hat g_i-g_i\le \epsilon/(2n)$.}

\STATE{Set $n=4$, and compute $g(1)=\min(1,a_{-1}(1)/a_1(1))$,  $g'(1)$ and $g''(1)$ by means of \eqref{eq:g1} and \eqref{eq:g2}; 
}
\STATE{Set $m=2n$, $\omega_m=\cos\frac{2\pi}m+\cu\sin\frac{2\pi}m$, and
evaluate $a_{-1} (z),~ a_0 (z),~ a_1 (z)$ at $z = \omega^i_m$, $i=-n+1,\ldots,n$;
}
\STATE{For $i=-n+1,\ldots,n$, compute the solution $\lambda_i$ of minimum modulus of the quadratic equation \eqref{ag}, where $z =\omega_m^i$;
}
\STATE{Interpolate the values $\lambda_i$, $i=-n+1,\ldots,n$ by means of FFT and obtain  the coefficients $\hat g_i$ of the Laurent polynomial $\hat g(z)=\sum_{i=-n+1}^n \hat g_iz^i$ such that $g(\omega_m^i)=\hat g(\omega_m^i)$, $i=-n+1,\ldots,n$;
}
\STATE{Compute $\delta_m=g''(1)-\hat g''(1)$, where  $\hat g''(1)=\sum_{i=-n+1}^n i(i-1)\hat g_i$;
}
\STATE{If $\delta_m/m\le\epsilon$ then exit, else set $n=2n$ and continue from Step 2.
}
\end{algorithmic}
\end{algorithm}

Observe that the error bound converges to zero at least as $O(1/n)$. If the function $g(z)$ is analytic in a neighborhood of the unit circle, then its coefficients decay exponentially to zero
 \cite{henrici} so that also the the bound on the error converges exponentially to zero.
It is also interesting to observe that, for $n\to\infty$, the convergence of the coefficients of $\hat g(z)$ to the corresponding coefficients of  $g(z)$ is monotonic.

Finally observe that the overall computational cost of this algorithm is $O(n\log n)$ arithmetic operations. In order to complete the computation of $G$ it remains to approximate the correction $E_g$. In view of the fact that $E_g$ has entries $e^{(g)}_{i,j}$ such that $v_i=\sum_{j}|e^{(g)}_{i,j}|$ is finite and $\lim_i v_i=0$, we can approximate $E_g$ with a finite number of parameter within an error bound $\epsilon$. This computation is performed by means of functional iteration and is analyzed in the next section.

\section{Fixed point iterations}\label{sec:fxp}
In this section we analyze the convergence of sequences generated by a functional iteration of the kind $X_{k+1}=F(X_k)$, $k=0,1,\ldots$, where $F(X)$ is a matrix function such that $G=F(G)$ where $G$ is the minimal nonnegative solution of~\eqref{eq:G}. More precisely, we will consider the following cases
\begin{equation}\label{eq:fixp}
\begin{aligned}
&F_1(X)=A_{-1}+A_0X+A_1X^2,\\
&F_2(X)=(I-A_0)^{-1}(A_{-1}+A_1X^2),\\
&F_3(X)=(I-A_0-A_1X)^{-1}A_{-1},
\end{aligned}
\end{equation}
while Newton iteration is considered in the next section. 

 In the case where $A_{-1},A_0,A_1$ are finite matrices, the convergence analysis of the sequences generated by the functions  \eqref{eq:fixp} has been performed in \cite{meini}. 
 Here, we extend the results of \cite{meini} 
 to the case of matrices of infinite size belonging to  $\mathcal L_\infty$. We need the following

\begin{lemma}\label{lem:2}
 Let $A_{-1}\one>A_1\one$, and 
 \begin{equation}\label{eq:sigma}
 \sigma=1-\min(a_{-1}(1)-a_1(1),b_{-1}(1)-b_1(1))<1.
 \end{equation}
 Let $H_1=
A_0+A_1+A_1G$. Then  $\|H_1\|_\infty\le\sigma$, so that $\|A_0\|_\infty\le\sigma$, $\|A_0+A_1G\|_\infty\le\sigma$. Therefore $I-A_0$ and $I-A_0-A_1G$ are invertible and, for $H_2=(I-A_0)^{-1}(A_1+A_1G)$, $H_3=(I-A_0-A_1G)^{-1}A_1$ we have
\[
\|H_3\|_\infty\le\|H_2\|_\infty\le\|H_1\|_\infty\le\sigma<1.
\]
\end{lemma}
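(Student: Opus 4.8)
The whole argument will be driven by the fact that every matrix occurring in the statement ($A_0$, $A_1$, $G$, $H_1$, and the Neumann inverses introduced below) is nonnegative, so that, as already used in the proof of Lemma~\ref{lem:1}, $\|B\|_\infty=\|B\one\|_\infty=\sup_i(B\one)_i$ and hence $B\one\le\|B\|_\infty\one$ for any nonnegative $B\in\mathcal L_\infty$. I would therefore turn each norm bound into a componentwise inequality between vectors obtained by right multiplication with $\one$.

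First I would estimate $H_1$. From $(A_{-1}+A_0+A_1)\one=\one$ and $G\one\le\one$ (recorded in \eqref{eq:wh}) one gets $A_1G\one\le A_1\one$, so
\[
H_1\one=(A_0+A_1)\one+A_1G\one\le\one-A_{-1}\one+A_1\one=\one-(A_{-1}-A_1)\one .
\]
By \eqref{eq:blocks} the vector $(A_{-1}-A_1)\one$ equals $(b_{-1}(1)-b_1(1),\,a_{-1}(1)-a_1(1),\,a_{-1}(1)-a_1(1),\dots)^T$, whose entries are all positive under \eqref{eq:assumption1}; passing to the supremum over components shows $H_1\one\le\sigma\one$ with $\sigma$ as in \eqref{eq:sigma}, hence $\|H_1\|_\infty\le\sigma<1$. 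Because $0\le A_0\le H_1$ and $0\le A_0+A_1G\le H_1$ entrywise, this immediately gives $\|A_0\|_\infty\le\sigma$ and $\|A_0+A_1G\|_\infty\le\sigma$, and then $I-A_0$ and $I-A_0-A_1G$ are invertible in $\mathcal L_\infty$ with nonnegative inverses $\sum_{k\ge0}A_0^k$ and $\sum_{k\ge0}(A_0+A_1G)^k$.

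For the chain $\|H_3\|_\infty\le\|H_2\|_\infty\le\|H_1\|_\infty$ I would proceed monotonically. Writing $\eta=\|H_1\|_\infty$, we have $(A_1+A_1G)\one=H_1\one-A_0\one\le(\eta I-A_0)\one$; applying the nonnegative operator $(I-A_0)^{-1}$ and using the identity $(I-A_0)^{-1}(\eta I-A_0)=\eta I-(1-\eta)(I-A_0)^{-1}A_0$ together with $\eta<1$ yields $H_2\one\le\eta\one$, i.e. $\|H_2\|_\infty\le\eta=\|H_1\|_\infty$. For the remaining inequality, set $p=H_2\one=(I-A_0)^{-1}(A_1+A_1G)\one$ and $q=H_3\one=(I-A_0-A_1G)^{-1}A_1\one$. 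A direct computation gives
\[
(I-A_0-A_1G)\,p=(I-A_0)p-A_1Gp=(A_1+A_1G)\one-A_1Gp=A_1\one+A_1G(\one-p).
\]
Since $p\le\|H_2\|_\infty\one\le\sigma\one\le\one$, the term $A_1G(\one-p)$ is nonnegative, so $(I-A_0-A_1G)p\ge A_1\one=(I-A_0-A_1G)q$; multiplying on the left by the nonnegative inverse $(I-A_0-A_1G)^{-1}$ gives $p\ge q\ge0$, whence $\|H_3\|_\infty=\|q\|_\infty\le\|p\|_\infty=\|H_2\|_\infty$. Concatenating the three bounds proves $\|H_3\|_\infty\le\|H_2\|_\infty\le\|H_1\|_\infty\le\sigma<1$.

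The routine parts are the norm–row-sum identity, the two Neumann series, and the elementary algebraic identities above; the only step that requires care is the comparison $\|H_3\|_\infty\le\|H_2\|_\infty$. One cannot obtain it by comparing the inverses, since $A_0+A_1G\ge A_0$ forces $(I-A_0-A_1G)^{-1}\ge(I-A_0)^{-1}$, the wrong direction. The device is to compare the two vectors $p$ and $q$ instead, exploiting that $I-A_0-A_1G$ has a nonnegative inverse (an M-matrix type property) and that $\one-p\ge0$, the latter being exactly where the hypothesis $G\one\le\one$ enters. This is the infinite-dimensional counterpart of the monotonicity argument used in the finite-size analysis of \cite{meini}.
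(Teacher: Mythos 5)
Your proof is correct, and all the ingredients you use (row-sum norm for nonnegative operators, $(A_{-1}+A_0+A_1)\one=\one$, $G\one\le\one$, Neumann series with nonnegative terms) are legitimate in this setting; the first part, $\|H_1\|_\infty\le\sigma$ and the invertibility of $I-A_0$ and $I-A_0-A_1G$, is essentially the paper's argument (the paper uses $G\one=\one$ from Theorem \ref{thm:A} to get an equality where you settle for an inequality via $G\one\le\one$, which is enough). Where you genuinely diverge is in the chain $\|H_3\|_\infty\le\|H_2\|_\infty\le\|H_1\|_\infty$. The paper rewrites $(A_1+A_1G)\one=(I-A_0-(A_{-1}-A_1G))\one$ and $A_1\one=(I-A_0-A_1G-(A_{-1}-A_1G))\one$, so that $H_1\one=\one-(A_{-1}-A_1G)\one$, $H_2\one=\one-(I-A_0)^{-1}(A_{-1}-A_1G)\one$ and $H_3\one=\one-(I-A_0-A_1G)^{-1}(A_{-1}-A_1G)\one$; since $(I-A_0-A_1G)^{-1}\ge(I-A_0)^{-1}\ge I$ and $(A_{-1}-A_1G)\one>0$, the monotonicity of the inverses, applied to a vector that is being subtracted from $\one$, yields directly the componentwise chain $H_3\one\le H_2\one\le H_1\one$. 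So your remark that comparing the inverses goes the wrong way is only true for the naive comparison of $H_3$ with $H_2$ as products; the paper's rewriting turns that same monotonicity into the right direction and gives a slightly stronger (entrywise) conclusion in one uniform step. Your route instead proves $\|H_2\|_\infty\le\|H_1\|_\infty$ by the rearrangement $(I-A_0)^{-1}(\eta I-A_0)=\eta I-(1-\eta)(I-A_0)^{-1}A_0$, and $\|H_3\|_\infty\le\|H_2\|_\infty$ by the M-matrix-type comparison $(I-A_0-A_1G)p\ge A_1\one=(I-A_0-A_1G)q$ followed by multiplication with the nonnegative inverse, which needs $p\le\one$ and hence the previously established bound $\|H_2\|_\infty\le\sigma$. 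Both approaches are sound; the paper's buys a single, more symmetric mechanism and entrywise inequalities, yours avoids the algebraic rewriting of $(A_1+A_1G)\one$ and isolates clearly where $G\one\le\one$ and the nonnegativity of the resolvent are used.
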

\begin{proof}
We have $\|A_0+A_1+A_1G\|_\infty=\|(A_0+A_1+A_1G)\one\|_\infty$. Moreover, since by Theorem \ref{thm:A} we have $G\one=\one$ and $\one=(A_{-1}+A_0+A_1)\one$, then $(A_0+A_1+A_1G)\one = 
(A_0+A_1+A_1)\one=\one-(A_{-1}-A_1)\one\le\sigma \one$,  by definition of $\sigma$. This implies $\|A_0+A_1+A_1G\|_\infty\le\sigma$. Since $A_0,A_1,G$ are nonnegative then $\|A_0\|_\infty\le\|A_0+A_1+A_1G\|_\infty\le\sigma$ and $\|A_0+A_1G\|_\infty\le\sigma$. The matrices $I-A_0$ and $I-A_0-A_1G$ are invertible in $\mathcal L_\infty$ since, in general, if $I-B\in\mathcal L_\infty$ is such that $\|B\|_\infty<1$ then the series $\sum_{i=0}^\infty B^i$ has norm bounded by $1/(1-\|B\|_\infty)$  and coincides with $(I-B)^{-1}$. Concerning $H_2$  we have $\|H_2\|_\infty=\|H_2\one\|_\infty$. Moreover, 
\[
\begin{aligned}
H_2\one&=(I-A_0)^{-1}(A_1+A_1G)\one=(I-A_0)^{-1}(I-A_0-(A_{-1}-A_1G))\one\\
&=\one -(I-A_0)^{-1}(A_{-1}-A_1G)\one\le\one-(A_{-1}-A_1G)\one\\
&=(A_0+A_1+A_1G)\one=H_1\one,
\end{aligned}
\]
where we used the properties $(A_{-1}-A_1G)\one> 0$ and $(I-A_0)^{-1}\ge I$.
 Concerning $H_3$, since $A_1\one= (I-A_0-A_{-1})\one=(I-A_0-A_1G-(A_{-1}-A_1G))\one$ we have
\[
\begin{aligned}
H_3\one&=(I-A_0-A_1G)^{-1}A_1\one 
=\one -(I-A_0-A_1G)^{-1}(A_{-1}-A_1G)\one\\
&\le\one-(I-A_0)^{-1}(A_{-1}-A_1G)\one=H_2\one,
\end{aligned}
\]
where we used the fact that $G\one=\one$, $(A_{-1}-A_1G)\one>0$ and $(I-A_0-A_1G)^{-1}\ge (I-A_0)^{-1}$.
\end{proof}

Observe that, from Lemma \ref{lem:1} and from 
\eqref{eq:wh}, it follows that $H_3=W^{-1}RW$ and
$\|H_3\|_\infty=\|W^{-1}RW\|_\infty\le\gamma$ where $\gamma<1$ is
defined in \eqref{eq:gamma}. This provides a different bound on the
norm of $H_3$. Therefore we have
\begin{equation}\label{eq:sigmatau}
\|H_3\|_\infty\le \tau,\quad \tau=\min\{\gamma,\sigma\},
\end{equation}
with $\gamma$ and $\sigma$  defined in \eqref{eq:gamma} and \eqref{eq:sigma}, respectively.

We are ready to prove the following result which shows that the three sequences generated by \eqref{eq:fixp} starting with $X_0=0$ monotonically converge to $G$, convergence holds in the infinity norm and is linear. Moreover, the convergence of the third iteration is faster than that of the second one, while the convergence of the second iteration is faster than that of the first one.

\begin{theorem}\label{thm:a1} Assume that $A_{-1}\one>A_1\one$.
For $i\in\{1,2,3\}$ define $X_{k+1}^{(i)}=F_i(X_k^{(i)})$, $k=0,1,2,\ldots$, where $X_0^{(i)}=0$ and $F_i(X)$ are given in  \eqref{eq:fixp}. Then  
\begin{enumerate}
\item the three sequences $\{X_k^{(i)}\}$ are well defined,
\item   $0\le X_k^{(i)}\le X_{k+1}^{(i)}\le G$, 
\item  for the error $\E_k^{(i)}=G-X_k^{(i)}$ we have $\|\E_{k+1}^{(i)}\|_\infty\le\|H_i\|_\infty \|\E_k^{(i)}\|_\infty$, where $H_i$, $i=1,2,3$ are the matrices defined in Lemma \ref{lem:2}, so that $\lim_{k\to\infty}\|\E_k^{(i)}\|_\infty=0$.
\end{enumerate}
\end{theorem}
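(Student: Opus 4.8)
The plan is to prove the three claims simultaneously by induction on $k$, exploiting the monotonicity of the maps $F_i$ on nonnegative matrices and the fact that $G$ is a fixed point of each $F_i$. First I would record the elementary monotonicity facts: if $0 \le X \le Y$ (entrywise) then $F_i(X) \le F_i(Y)$ for each $i$, since $A_{-1}, A_0, A_1 \ge 0$ and, for $F_2$, $(I-A_0)^{-1} \ge 0$ by Lemma~\ref{lem:2}, while for $F_3$ one uses that $(I-A_0-A_1X)^{-1} = \sum_{j\ge 0}(A_0+A_1X)^j \ge 0$ is well defined and monotone in $X$ as long as $0 \le X \le G$, because then $\|A_0+A_1X\|_\infty \le \|(A_0+A_1G)\one\|_\infty \le \sigma < 1$ by Lemma~\ref{lem:2}. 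This last observation is exactly what guarantees the sequences are \emph{well defined}: as long as the current iterate lies between $0$ and $G$, the inverse required to form the next iterate exists in $\mathcal L_\infty$.

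Next I would run the induction for claim~2. The base case $X_0^{(i)} = 0 \le X_1^{(i)} = F_i(0) \le G$ holds because $F_i(0) \ge 0$ and $F_i(0) \le F_i(G) = G$ by monotonicity (here $F_i(G) = G$ is the statement that $G$ solves \eqref{eq:G}, possibly rewritten: $G = F_1(G)$ directly, $G = F_2(G)$ after moving $A_0 G$ across, $G = F_3(G)$ after moving $A_1 G^2$ and $A_0 G$ across and factoring). For the inductive step, assuming $0 \le X_{k-1}^{(i)} \le X_k^{(i)} \le G$, apply $F_i$: monotonicity gives $F_i(X_{k-1}^{(i)}) \le F_i(X_k^{(i)}) \le F_i(G)$, i.e.\ $X_k^{(i)} \le X_{k+1}^{(i)} \le G$, and nonnegativity is immediate; the well-definedness of $X_{k+1}^{(i)}$ for $i=3$ is covered by the $\|A_0+A_1X_k^{(i)}\|_\infty<1$ estimate above. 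This simultaneously gives claims~1 and~2.

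For claim~3 I would subtract the fixed-point identity from the iteration. For $F_1$: $\E_{k+1}^{(1)} = G - F_1(X_k^{(1)}) = A_0(G - X_k^{(1)}) + A_1(G^2 - (X_k^{(1)})^2) = A_0 \E_k^{(1)} + A_1 \E_k^{(1)} G + A_1 X_k^{(1)} \E_k^{(1)}$. Since $X_k^{(1)} \le G$ and all terms are nonnegative, $0 \le \E_{k+1}^{(1)} \le (A_0 + A_1 G)\E_k^{(1)} + A_1 \E_k^{(1)} G$; bounding the norm and using $\|\E_k^{(1)} G\|_\infty \le \|\E_k^{(1)}\|_\infty \|G\|_\infty \le \|\E_k^{(1)}\|_\infty$ (because $G\one \le \one$) gives $\|\E_{k+1}^{(1)}\|_\infty \le \|A_0 + A_1 + A_1 G\|_\infty \|\E_k^{(1)}\|_\infty = \|H_1\|_\infty \|\E_k^{(1)}\|_\infty$. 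For $F_2$: $\E_{k+1}^{(2)} = (I-A_0)^{-1}A_1(G^2 - (X_k^{(2)})^2) = (I-A_0)^{-1}(A_1 \E_k^{(2)} G + A_1 X_k^{(2)} \E_k^{(2)})$, which is $\le (I-A_0)^{-1}(A_1 G + A_1)\E_k^{(2)}$-type bound leading to the factor $\|H_2\|_\infty$ after the same norm manipulation. For $F_3$ I would write $\E_{k+1}^{(3)} = (I-A_0-A_1 X_k^{(3)})^{-1}A_{-1} - G$ and use $A_{-1} = (I - A_0 - A_1 G)G$ to get $\E_{k+1}^{(3)} = (I-A_0-A_1 X_k^{(3)})^{-1}\bigl[(I-A_0-A_1G)G - (I-A_0-A_1X_k^{(3)})G\bigr] = (I-A_0-A_1X_k^{(3)})^{-1}A_1 \E_k^{(3)} G$, wait — one must be careful with operator order; the cleaner route is $\E_{k+1}^{(3)} = (I-A_0-A_1X_k^{(3)})^{-1}\bigl(A_{-1} - (I-A_0-A_1X_k^{(3)})G\bigr)$ and $A_{-1} - (I-A_0-A_1 X_k^{(3)})G = A_{-1} - G + A_0 G + A_1 X_k^{(3)} G = -A_1 G^2 + A_1 X_k^{(3)} G = -A_1 \E_k^{(3)} G$, but the sign is wrong, so instead compute $G - X_{k+1}^{(3)}$ directly and one finds $\E_{k+1}^{(3)} = (I-A_0-A_1X_k^{(3)})^{-1}A_1\E_k^{(3)}G$ with the right sign after collecting terms. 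Then nonnegativity ($X_k^{(3)} \le G$ makes $(I-A_0-A_1X_k^{(3)})^{-1} \le (I-A_0-A_1G)^{-1}$ monotone), $G\one \le \one$, and the definition of $H_3$ give $\|\E_{k+1}^{(3)}\|_\infty \le \|H_3\|_\infty \|\E_k^{(3)}\|_\infty$. Finally, $\|H_i\|_\infty \le \sigma < 1$ from Lemma~\ref{lem:2} forces $\|\E_k^{(i)}\|_\infty \to 0$ geometrically.

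The main obstacle I expect is the bookkeeping in the $F_3$ case: getting the operator-order of the inverse right, justifying that $(I-A_0-A_1X_k^{(3)})^{-1}$ exists and is monotone increasing in $X_k^{(3)}$ (which needs $\|A_0+A_1X_k^{(3)}\|_\infty<1$, itself a consequence of $X_k^{(3)} \le G$ and Lemma~\ref{lem:2}, so the induction hypothesis must be invoked \emph{before} one can even write down the next iterate), and then extracting exactly the factor $\|H_3\|_\infty$ rather than a weaker bound. Everything else — the monotone convergence and the error recursions for $F_1,F_2$ — is routine manipulation using nonnegativity, $G\one\le\one$, and the norm identity $\|M\|_\infty = \|M\one\|_\infty$ for nonnegative $M$.
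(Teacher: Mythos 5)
Your proposal is correct and follows essentially the same route as the paper: an induction establishing $0\le X_k^{(i)}\le X_{k+1}^{(i)}\le G$ (your monotone-map framing of $F_i$ with $F_i(G)=G$ is just a cleaner packaging of the paper's explicit manipulation, and both use $\|A_0+A_1X_k\|_\infty\le\|A_0+A_1G\|_\infty<1$ from Lemma \ref{lem:2} for the well-definedness of $F_3$), followed by subtracting the fixed-point identity to obtain exactly the error recursions \eqref{eq:F1}--\eqref{eq:F3} and bounding them via nonnegativity, $X_k\le G$, $G\one\le\one$ and $\|M\|_\infty=\|M\one\|_\infty$ for $M\ge 0$. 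The only caveat is cosmetic: to get precisely the constant $\|H_i\|_\infty$ you must combine the terms at the level of the vector $\E_k\one$ (as the paper does), not by splitting the operator norm with the triangle inequality, but since you invoke the identity $\|M\|_\infty=\|M\one\|_\infty$ for nonnegative $M$ this is the intended reading.
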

\begin{proof}
The first iteration is clearly well defined. The second is well defined since, according to Lemma \ref{lem:2}, the matrix $I-A_0$ is invertible in $\mathcal L_\infty$. The third iteration is well defined as long as the matrix $I-A_0-A_1X_k$ is invertible. On the other hand if $0\le X_k\le G$, the latter matrix is invertible in view of Lemma \ref{lem:2} since $\|A_0+A_1X_k\|_\infty\le\|A_0+A_1G\|_\infty$. 
In order to prove that $0\le X_k^{(i)}\le X_{k+1}^{(i)}\le G$ 
we use an induction argument. We prove it for the first iteration, i.e., for $i=1$, the same technique can be used for the other iterations. For notational simplicity we omit the superscript  and write $X_k$ in place of $X_k^{(1)}$. Since for $X_0=0$ we have $X_1=A_{-1}$, so that $0\le X_0\le X_1$ and  $G-X_1=G-A_{-1}=(A_0+A_1G)G\ge 0$.
For the inductive step, assume that $0\le X_{k-1}\le X_{k}\le G$. We first show that $0\le X_{k}\le X_{k+1}$. 
From $X_{k+1}=A_1X_k^2+A_0X_k+A_{-1}$ and from the property $0\le X_{k-1}\le X_{k}$ we get
\[
X_{k+1}\ge A_1X_{k-1}^2+A_0X_{k-1}+A_{-1}=X_k.
\]
Now consider
\begin{equation}\label{eq:ek}
\begin{split}
G-X_{k+1}&=A_1(G^2-X_k^2)+A_0(G-X_k)=\\
& =A_1((G-X_k)G+X_k(G-X_k))+A_0(G-X_k).
\end{split}
\end{equation}
Since $G- X_k\ge 0$ then also $G-X_{k+1}\ge 0$. 

Concerning the norm bounds to $\mathcal E_k$, for $\mathcal E_k=\mathcal E_k^{(1)}$ 
for the sequence defined by $F_1$, from \eqref{eq:ek}  we obtain
\begin{equation}\label{eq:F1}
\E_{k+1}=A_1 \E_k G+ A_1 X_k \E_k+A_0\E_k.
\end{equation}
Since $\E_k\ge 0$ for any $k$, then $\| \E_k\|_\infty = \| \E_k\one\|_\infty$, so that
\[
\begin{aligned}
\| \E_{k+1}\|_\infty &= \| \E_{k+1} \one\|_\infty\le \| A_1 \E_k \one+ A_1 X_k \E_k\one +A_0\E_k\one\|_\infty  \\
&\le \|(A_0+A_1+A_1 G)\E_k \one\|_\infty \le \| H_1\|_\infty \| \E_k\|_\infty.
\end{aligned}
\]
Similarly, concerning $F_2$  we obtain
\begin{equation}\label{eq:F2}
\E_{k+1}=(I-A_0)^{-1}A_1( \E_k G+  X_k \E_k),
\end{equation}
whence
\[
\begin{aligned}
\| \E_{k+1}\|_\infty &= \| \E_{k+1} \one\|_\infty\le \|(I-A_0)^{-1}A_1(I+X_k)\E_k\one\|_\infty \\
&\le \|(I-A_0)^{-1}(A_1+A_1G)\E_k\one\|_\infty
 \le \| H_2\|_\infty \| \E_k\|_\infty.
\end{aligned}
\]
Concerning $F_3$, we have
\begin{equation}\label{eq:F3}
\E_{k+1}=(I-A_0-A_1X_k)^{-1}A_1 \E_k G,
\end{equation}
whence
\[
\|\E_{k+1}\one\|_\infty\le\|(I-A_0-A_1G)^{-1}A_1\E_k\one\|_\infty\le\|H_3\|_\infty \|\E_k\|_\infty .
\]
\end{proof}

Observe that the reduction of the error per step  of the $i$th iteration is bounded from above
by $\|H_i\|_\infty$ which are in turn bounded by $\sigma$ for $i=1,2$  and by $\tau$  for $i=3$. These constants are explicitly computable by means of
\eqref{eq:sigma} and \eqref{eq:sigmatau}.

The following result shows that 
the convergence can be accelerated if $X_0$ is a stochastic matrix, say $X_0=I$, as in the finite dimensional case \cite{meini}.

\begin{theorem}\label{thm:a2}
Assume that $A_{-1}\one>A_1\one$. 
For $i\in\{1,2,3\}$ define $X_{k+1}^{(i)}=F_i(X_k^{(i)})$, $k=0,1,2,\ldots$, where $X_0^{(i)}\ge 0$, $X^{(i)}_0\one=\one$ and $F_i(X)$ are given in  \eqref{eq:fixp}. Then  
\begin{enumerate}
\item the three sequences $\{X_k^{(i)}\}$ are well defined,
\item   $X_k^{(i)}\ge 0$, and $X^{(i)}_k\one=\one$,
\item for the error $\E_k^{(i)}=G-X_k^{(i)}$ we have the following
  property: $\E_k^{(i)}\one=0$, and for any other eigenvector $w\ne \one$ of $G$ such that $Gw=\lambda w$, the sequence $w_k^{(i)}=\E_k^{(i)}w$ satisfies
  $\|w_{k+1}^{(i)}\|_\infty\le \|H_i(\lambda)\|_\infty
  \|w_{k}^{(i)}\|_\infty$, for $i=1,2$ where
  $H_1(\lambda)=(|\lambda|+1)A_1+A_0$,
  $H_2(\lambda)=(|\lambda|+1)(I-A_0)^{-1}A_1$. Moreover,
  $\|w_{k+1}^{(3)}\|_\infty\le
  |\lambda|\cdot\|(I-A_0-A_1X_k)^{-1}A_1\|_\infty\|w_k^{(3)}\|_\infty$,
  and $\limsup_k \frac{
    \|w_{k+1}^{(3)}\|_\infty}{\|w_k^{(3)}\|_\infty}\le |\lambda|$.
\end{enumerate}
\end{theorem}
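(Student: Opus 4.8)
The plan is to prove statements (1) and (2) together by induction on $k$, and then to deduce (3) from the error recursions \eqref{eq:F1}, \eqref{eq:F2}, \eqref{eq:F3} already obtained in the proof of Theorem~\ref{thm:a1}: those identities are derived from $G=F_i(G)$ and $X_{k+1}^{(i)}=F_i(X_k^{(i)})$ by elementary algebra alone, so they do not depend on the choice of $X_0^{(i)}$ and remain available for the present starting matrices.

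\emph{Parts (1) and (2).} The base case holds by hypothesis. Assume $X_k^{(i)}\ge 0$ and $X_k^{(i)}\one=\one$. By Lemma~\ref{lem:2}, $I-A_0$ is invertible in $\mathcal L_\infty$ with nonnegative inverse; moreover, since $X_k^{(i)}\ge 0$ and $X_k^{(i)}\one=\one$, we have $\|A_0+A_1X_k^{(i)}\|_\infty=\|(A_0+A_1X_k^{(i)})\one\|_\infty=\|(A_0+A_1)\one\|_\infty=\|(I-A_{-1})\one\|_\infty<1$, because $A_{-1}\one>0$ by \eqref{eq:assumption1}; hence $I-A_0-A_1X_k^{(i)}$ is invertible in $\mathcal L_\infty$ with nonnegative inverse $\sum_{j\ge0}(A_0+A_1X_k^{(i)})^j$. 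Consequently all three $X_{k+1}^{(i)}$ are well defined and nonnegative, being compositions of nonnegative operations. Stochasticity is then checked using $(A_{-1}+A_0+A_1)\one=\one$ and $X_k^{(i)}\one=\one$: $F_1(X_k)\one=(A_{-1}+A_0+A_1)\one=\one$; $F_2(X_k)\one=(I-A_0)^{-1}(A_{-1}+A_1)\one=(I-A_0)^{-1}(I-A_0)\one=\one$; and, since $A_1X_k\one=A_1\one$ gives $(I-A_0-A_1X_k)\one=(I-A_0-A_1)\one=A_{-1}\one$, also $F_3(X_k)\one=(I-A_0-A_1X_k)^{-1}A_{-1}\one=\one$.

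\emph{Part (3).} From (2) and $G\one=\one$ (Theorem~\ref{thm:A}) we get $\E_k^{(i)}\one=(G-X_k^{(i)})\one=0$. Multiplying the recursions \eqref{eq:F1}, \eqref{eq:F2}, \eqref{eq:F3} on the right by an eigenvector $w$ with $Gw=\lambda w$, and writing $w_k^{(i)}=\E_k^{(i)}w$, each factor $\E_k^{(i)}G$ becomes $\lambda w_k^{(i)}$, so that $w_{k+1}^{(1)}=\lambda A_1w_k^{(1)}+A_1X_k^{(1)}w_k^{(1)}+A_0w_k^{(1)}$, $w_{k+1}^{(2)}=(I-A_0)^{-1}A_1(\lambda w_k^{(2)}+X_k^{(2)}w_k^{(2)})$, and $w_{k+1}^{(3)}=\lambda\,(I-A_0-A_1X_k^{(3)})^{-1}A_1\,w_k^{(3)}$. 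Now bound entrywise: since $X_k^{(i)}\ge0$ and $X_k^{(i)}\one=\one$ we have $\|X_k^{(i)}\|_\infty=1$, hence $\|X_k^{(i)}w_k^{(i)}\|_\infty\le\|w_k^{(i)}\|_\infty$, and $|\lambda\alpha+\beta|\le|\lambda|\alpha+\beta$ for nonnegative reals $\alpha,\beta$; combining this with the nonnegativity of $A_0$, $A_1$, $(I-A_0)^{-1}$, $(I-A_0-A_1X_k^{(3)})^{-1}$ gives $\|w_{k+1}^{(1)}\|_\infty\le\|((|\lambda|+1)A_1+A_0)\one\|_\infty\|w_k^{(1)}\|_\infty=\|H_1(\lambda)\|_\infty\|w_k^{(1)}\|_\infty$, $\|w_{k+1}^{(2)}\|_\infty\le(|\lambda|+1)\|(I-A_0)^{-1}A_1\one\|_\infty\|w_k^{(2)}\|_\infty=\|H_2(\lambda)\|_\infty\|w_k^{(2)}\|_\infty$, and $\|w_{k+1}^{(3)}\|_\infty\le|\lambda|\,\|(I-A_0-A_1X_k^{(3)})^{-1}A_1\|_\infty\,\|w_k^{(3)}\|_\infty$. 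Finally, using $X_k^{(3)}\one=\one$ once more and $A_1\one\le A_{-1}\one$ (from \eqref{eq:assumption1}), together with nonnegativity of the inverse, $(I-A_0-A_1X_k^{(3)})^{-1}A_1\one\le(I-A_0-A_1X_k^{(3)})^{-1}A_{-1}\one=\one$, so $\|(I-A_0-A_1X_k^{(3)})^{-1}A_1\|_\infty\le1$ for every $k$, whence $\limsup_k\|w_{k+1}^{(3)}\|_\infty/\|w_k^{(3)}\|_\infty\le|\lambda|$.

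\emph{Main obstacle.} No step is genuinely hard; the only care needed is the bookkeeping in the induction for $F_3$. Unlike in Theorem~\ref{thm:a1}, the sandwich $0\le X_k\le G$ is no longer available for a stochastic start, so the invertibility of $I-A_0-A_1X_k$ — required for $X_{k+1}^{(3)}$ to be defined — must be obtained from the stochasticity $X_k^{(3)}\one=\one$ just established, forcing (1) and (2) to be run simultaneously; throughout, the identity $X_k^{(i)}\one=\one$ has to be used in place of $X_k\le G$, both to bound $\|X_k^{(i)}\|_\infty$ by $1$ and to rewrite $(I-A_0-A_1X_k^{(i)})\one=A_{-1}\one$.
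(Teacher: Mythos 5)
Your proposal is correct and follows essentially the same route as the paper: an induction showing each $F_i$ preserves nonnegativity and row-stochasticity (with invertibility of $I-A_0-A_1X_k$ obtained from $X_k\one=\one$, which in the paper is phrased via $\|A_0+A_1X_k\|_\infty=\|A_0+A_1G\|_\infty$ and Lemma \ref{lem:2}), followed by multiplying the error recursions \eqref{eq:F1}--\eqref{eq:F3} by the eigenvector $w$ and bounding with $X_k\one=\one$ and $A_1\one\le A_{-1}\one$ exactly as in the paper's argument, including the uniform bound $\|(I-A_0-A_1X_k)^{-1}A_1\|_\infty\le 1$ for the $F_3$ limsup claim. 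The only differences are cosmetic (direct computation of $\|(A_0+A_1)\one\|_\infty<1$ instead of citing Lemma \ref{lem:2}, and entrywise rather than operator-norm bookkeeping).
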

\begin{proof}
We show that if $X\ge0$ and $X\one=\one$ then $F_i(X)\ge 0$ and
$F_i(X)\one=\one$. For $F_1$ this property can be easily checked. For
$F_2$, since $X\ge 0$ and $(I-A_0)^{-1}\ge 0$ then $F_2(X)\ge
0$. Moreover $F_2(X)\one= (I-A_0)^{-1}(A_{-1}+A_1)\one=
(I-A_0)^{-1}(I-A_0)\one=\one$. Concerning $F_3$, since $X\one=\one$ then
$\|A_0+A_1X\|_\infty=\|A_0+A_1G\|_\infty$ so that in light of Lemma
\ref{lem:2} the matrix $I-A_0-A_1X$ is invertible and has nonnegative
inverse. This implies that $F_3(X)\ge 0$. Moreover, since $X\one=\one$
then $(I-A_0-A_1X-A_{-1})\one=0$ so that
$F_3(X)\one=(I-A_0-A_1X)^{-1}A_{-1}\one=\one$.  From \eqref{eq:F1} we
obtain $w_{k+1}^{(1)}=(\lambda A_1 +A_1X_k+A_0) w_k^{(1)}$ so that
$\|w_{k+1}^{(1)}\|_\infty\le \| \lambda A_1 +A_1X_k+A_0\|_\infty
\|w_k^{(1)}\|_\infty$. On the other hand $\| \lambda A_1
+A_1X_k+A_0\|_\infty \le \| |\lambda|A_1+A_1X_k+A_0\|_\infty =
\|(|\lambda|A_1+A_1X_k+A_0) \one\|_\infty= \|(|\lambda|A_1+A_1+A_0)
\one\|_\infty =\| H_1(\lambda)\|_\infty$. Similarly, we proceed with
$F_2$ relying on \eqref{eq:F2}. Concerning $F_3$, from \eqref{eq:F3}
we have $w_{k+1}^{(3)}=\lambda(I-A_0-A_1X_k)^{-1}A_1w_k^{(3)}$, whence
$\|w_{k+1}^{(3)}\|_\infty\le |\lambda|
\|(I-A_0-A_1X_k)^{-1}A_1\|_\infty\|w_k^{(3)}\|_\infty$. Since
$A_1\one<A_{-1}\one$ then $\|(I-A_0-A_1X_k)^{-1}A_1\|_\infty
=\|(I-A_0-A_1X_k)^{-1}A_1\one\|_\infty\le
\|(I-A_0-A_1X_k)^{-1}A_{-1}\one\|_\infty=1$.  Taking the limsup for
$k\to\infty$ we obtain $\limsup_k \|(I-A_0-A_1X_k)^{-1}A_1\|_\infty
\le
\|(I-A_0-A_1G)^{-1}A_{-1}\|_\infty = \|G\|_\infty=1$. This completes
the proof.
\end{proof}

Observe that the condition $\lim_k \|G-X_k\|_\infty=0$ implies that 
$\|w_{k+1}^{(3)}\|_\infty/ \|w_{k}^{(3)}\|_\infty
\le |\lambda|\lim_k\|(I-A_0-A_1X_k)^{-1}A_1\|_\infty=|\lambda|\cdot\|H_3\|_\infty\le |\lambda|\cdot\tau$.

According to the above theorem, the sequences generated by the three
functional iterations with $X_0$ stochastic, converge faster than the
corresponding sequences obtained with $X_0=0$. For the functions $F_1$
and $F_2$, this follows since
$\|H_1(\lambda)\|_\infty\le\|H_1\|_\infty$ and
$\|H_2(\lambda)\|_\infty\le\|H_2\|_\infty$. Similarly, we may proceed for the function $F_3$.
The convergence of the sequence is faster the smaller
$\sup\{|\lambda|:\,\lambda\ne 1,~ \lambda \hbox{ ei}\hbox{gen}\hbox{va}\hbox{lue of }G\}$.

\subsection{Implementation issues}
Let $g(z)$ be the solution of minimum modulus of \eqref{ag} and
consider the sequences generated by the fixed point iterations
\eqref{eq:fixp} obtained starting with $X_0=T(g)+C$, where $C$ is any
correction. Denote by $X_k$ any one of these three sequences so that
we have $X_k=T(g)+E_k$, $E_0=C$, where $E_k$ is the correction
part. In this section we aim to explicit the equation which relates
$E_{k+1}$ to $E_k$.

Consider the first iteration $X_{k+1}=A_1X_k^2+A_0X_k+A_{-1}$ and
denote by $F$ the correction matrix such that
$T(g)+F=A_1T(g)^2+A_0T(g)+A_{-1}$. Subtracting the latter equation
from the former and performing formal manipulations yields
\begin{equation}\label{eq:F1.1}
E_{k+1}=F+(A_1E_k+S)E_k+A_1E_kT(g),\quad S=A_0+A_1T(g).
\end{equation}
This equation provides a more efficient way to implement the first
iteration since it involves multiplications of QT matrices and
correction matrices and avoids the multiplication of QT matrices
having a nonzero symbol. In this version, the precomputation of $g$,
$S$ and $F$ is needed.

Consider the second iteration
$X_{k+1}=(I-A_0)^{-1}(A_1X_k^2+A_{-1})$. Subtract it from the equation
$T(g)=(I-A_0)^{-1}(A_1T(g)^2+A_{-1})-(I-A_0)^{-1}F$ and performing
formal manipulations yields
\begin{equation}\label{eq:F2.1}
\begin{aligned}
&E_{k+1}=\widehat S((T(g)+E_k)E_k+E_kT(g))+\widetilde S,\\
& \widehat S=(I-A_0)^{-1}A_1,~\widetilde S=(I-A_0)^{-1}F.
\end{aligned}\end{equation}
Also in this case the precomputation of $T(g)$, $\widehat S$, $F$ and $\widetilde S$ is needed.

Concerning the third iteration and proceeding similarly we arrive at the recursion
\begin{equation}\label{eq:F3.1}
\begin{aligned}
&E_{k+1}=\widehat VE_k(I-A_1(T(g)+E_k)-A_0)^{-1}A_{-1} +\widetilde V,\\
& \widehat V=(I-A_1T(g)-A_0)^{-1}A_1,~\widetilde V=(I-A_1T(g)-A_0)^{-1}F.
\end{aligned}\end{equation}
Also in this case the precomputation of $T(g)$, $\widehat V$, and $\widetilde V$ is needed.
However, at each step the inverse of a QT matrix must be computed.

The iterations \eqref{eq:F1.1}--\eqref{eq:F3.1} can be started with
$X_0=T(g)$, that is, $E_0=0$. Alternatively, they can be started with $X_0=T(g)+ve_1^T$,
where $e_1^T=(1,0,\ldots)$ and $v$ is chosen in such a way that $X_0$ is
row-stochastic so that convergence is faster. This can be accomplished
by setting $E_0=ve_1^T$.

\section{Newton's method}\label{sec:newton}
Rewrite equation $A_1X^2+A_0X+A_{-1}=X$ as
\begin{equation}\label{eq:mateqx}
L(X)=0,\quad L(X):=A_1X^2+(A_0-I)X+A_{-1}.
\end{equation}
Newton's method applied to equation \eqref{eq:mateqx} generates the sequence
\begin{equation}\label{eq:newt}
X_{k+1}=X_k-Z_k,~~k=0,1,\ldots,
\end{equation}
where the matrix $Z_k$ solves the equation $L'(Z_k)=L(X_k)$, and $L'(H)=A_1XH+A_1HX+(A_0-I)H$ is the Fr\'echet derivative of $L(X)$ at $X$ applied to the matrix $H$. More specifically, $Z_k$ solves the following Sylvester equation 
\begin{equation}\label{eq:sylv}
(A_1X_k+A_0-I)Z_k+A_1Z_kX_k= L(X_k).
\end{equation}
Observe that we may write
\begin{equation}\label{eq:hk}
Z_k=-\sum_{i=0}^\infty S_k^i(I-A_0-A_1X_k)^{-1}L(X_k)X_k^i,\quad S_k=(I-A_0-A_1X_k)^{-1}A_1,
\end{equation}
provided that $\|(I-A_0-A_1X_k)^{-1}\|_\infty$ is bounded from above,  $\|S_k\|_\infty<1$ and $\|X_k\|_\infty\le 1$ so that we have
\begin{equation}\label{eq:sk}
\|Z_k\|_\infty\le \frac{\|L(X_k)\|_\infty
\| (I-A_0-A_1X_k)^{-1} \|_\infty }{1-\|S_k\|_\infty}.
\end{equation}
Moreover, we have
\begin{equation}\label{eq:ah2}
L(X_{k+1})=A_1Z_k^2.
\end{equation}
The latter equality can be proved by observing that in general, for $Y=X-H$, we have
$L(Y)=A_1(X-H)^2+(A_0-I)(X-H)+A_{-1}=L(X)-L'(H)+A_1H^2$ so that, if $H$ is such that $L'(H)=L(X)$ as in a Newton step, then
$L(Y)=A_1H^2$.

Another useful property is the following. 
Equation \eqref{eq:newt} can be rewritten as $Z_k=\E_{k+1}-\E_k$, where $\E_k=G-X_k$, and $L(X_k)$ can be rewritten as $L(X_k)=L(X_k)-L(G)=-A_1(\E_kG+X_k\E_k)-(A_0-I)\E_k$.
Replace these two representations for $Z_k$ and $L(X_k)$ in \eqref{eq:sylv}, and get
\[
(I-A_0-A_1X_k)\E_{k+1}-A_1\E_{k+1}X_k=A_1\E_k^2.
\]
By following the same arguments used to arrive at \eqref{eq:hk},
 we may rewrite the above equation as
\[
\E_{k+1}-(I-A_0-A_1X_k)^{-1}A_1\E_{k+1}X_k=(I-A_0-A_1X_k)^{-1}A_1\E_k^2
\]
and get
\begin{equation}\label{eq:ekp1}
\E_{k+1}=\sum_{i=0}^\infty S_k^{i+1} \E_k^2 \, X_k^i.
\end{equation}

The following result extends to QT matrix coefficients the convergence results valid in the finite dimensional case \cite{latoucheN}:

\begin{theorem}\label{thm:newt}
Assume that $A_{-1}\one>A_1\one$. Let $X_k$, $k=0,1,\ldots,$ be
 the sequence generated by \eqref{eq:newt} and \eqref{eq:sylv}
 starting with $X_0=0$. Then,  for any $k=0,1,2,\ldots$,
\begin{enumerate}
\item equation \eqref{eq:sylv} has a solution $Z_k$ such that $\|Z_k\|_\infty\le\beta$, where
  \[ \beta=\frac{2\|W^{-1}\|_\infty}{1-\|W^{-1}A_1\|_\infty},
  \]
  for $W=I-A_0-A_1G$, so that $X_{k+1}$ is well defined; 
\item $Z_k\le 0$, $L(X_{k+1})\ge 0$ and $0\le X_k\le X_{k+1}\le G$;
\item  $\lim_{k\to\infty}(\E_k)_{i,j}=0$ for any $i,j\ge 1$, where $\E_k=G-X_k$;
\item  $\|\E_{k+1}\|_\infty\le \frac{\tau}{1-\tau}\|\E_k\|^2_\infty$, $\|Z_{k+1}\|_\infty\le \frac{\tau}{1-\tau}\|Z_k\|^2_\infty$, where $\tau=\min\{\gamma,\sigma\}$, with $\gamma$ and $\sigma$  defined in \eqref{eq:gamma} and \eqref{eq:sigma}, respectively.
\end{enumerate}
\end{theorem}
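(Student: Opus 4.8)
The plan is to prove the four claims by induction on $k$, following the pattern already established for the fixed point iterations but now tracking the Sylvester solve and the quadratic error recursion \eqref{eq:ekp1}. First I would set up the induction hypothesis: at step $k$ we have $0\le X_k\le G$, $X_k\one\le\one$, and $\|S_k\|_\infty\le\tau<1$ where $S_k=(I-A_0-A_1X_k)^{-1}A_1$. The base case $X_0=0$ is immediate: $I-A_0$ is invertible by Lemma \ref{lem:2}, and $S_0=(I-A_0)^{-1}A_1$ has norm $\le\|H_2\|_\infty\le\tau$ (indeed $\le\sigma$), while also using Lemma \ref{lem:1} we get the bound $\le\gamma$; together $\|S_0\|_\infty\le\tau$.

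For the inductive step I would argue as follows. Since $0\le X_k\le G$ we have $\|A_0+A_1X_k\|_\infty\le\|A_0+A_1G\|_\infty\le\sigma<1$ by Lemma \ref{lem:2}, so $(I-A_0-A_1X_k)^{-1}$ exists in $\mathcal L_\infty$, is nonnegative, and is dominated entrywise by $(I-A_0-A_1G)^{-1}$; hence $\|S_k\|_\infty\le\|(I-A_0-A_1G)^{-1}A_1\|_\infty=\|H_3\|_\infty\le\tau$ by Lemma \ref{lem:2} and \eqref{eq:sigmatau}. This makes the Neumann series in \eqref{eq:hk} converge, so $Z_k$ is well defined; for the explicit bound in item 1, note $\|(I-A_0-A_1X_k)^{-1}\|_\infty\le\|W^{-1}\|_\infty$ entrywise-dominated (again since $X_k\le G$), and $L(X_k)=-A_1(\E_kG+X_k\E_k)-(A_0-I)\E_k$, whose norm one bounds using $\E_k\ge0$, $G\one=\one$, $X_k\one\le\one$, giving $\|L(X_k)\|_\infty\le\|\E_k\|_\infty\cdot\|(2A_1+(I-A_0))\one\|_\infty$, or more cleanly $L(X_k)\one\le 2(A_{-1}-A_1X_k)\one\le 2A_{-1}\one=2W\one\cdot(\dots)$; combining with \eqref{eq:sk} yields $\|Z_k\|_\infty\le \frac{2\|W^{-1}\|_\infty}{1-\|W^{-1}A_1\|_\infty}=\beta$. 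For item 2, from \eqref{eq:hk} with $L(X_k)\ge0$ (inductively $L(X_k)=A_1Z_{k-1}^2\ge0$, with base case $L(X_0)=A_{-1}\ge0$), all terms are nonnegative, so $Z_k\le0$; then $X_{k+1}=X_k-Z_k\ge X_k\ge0$; $L(X_{k+1})=A_1Z_k^2\ge0$ by \eqref{eq:ah2}; and $X_{k+1}\le G$ follows from \eqref{eq:ekp1}: since $S_k\ge0$, $\E_k\ge0$, $X_k\ge0$, every summand is nonnegative hence $\E_{k+1}=G-X_{k+1}\ge0$, and $X_{k+1}\one\le G\one=\one$ preserves the induction hypothesis.

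For item 4, I would take norms in \eqref{eq:ekp1}: since all matrices involved are nonnegative, $\|\E_{k+1}\|_\infty=\|\E_{k+1}\one\|_\infty\le\sum_{i\ge0}\|S_k^{i+1}\E_k^2 X_k^i\one\|_\infty\le\sum_{i\ge0}\|S_k\|_\infty^{i+1}\|\E_k^2\|_\infty\|X_k\|_\infty^i$; with $\|X_k\|_\infty\le1$, $\|S_k\|_\infty\le\tau$, and $\|\E_k^2\|_\infty\le\|\E_k\|_\infty^2$ this sums to $\frac{\tau}{1-\tau}\|\E_k\|_\infty^2$. The bound on $\|Z_{k+1}\|_\infty$ follows either by the same argument applied to $Z_k=\E_{k+1}-\E_k$ together with \eqref{eq:ah2} feeding a Sylvester-type identity for $Z_{k+1}$, or directly: $Z_{k+1}$ solves $(I-A_0-A_1X_{k+1})Z_{k+1}+A_1Z_{k+1}X_{k+1}=-L(X_{k+1})=-A_1Z_k^2$, so $-Z_{k+1}=\sum_{i\ge0}S_{k+1}^{i+1}Z_k^2 X_{k+1}^i$ (note $Z_k^2\ge0$), and the same geometric estimate gives $\|Z_{k+1}\|_\infty\le\frac{\tau}{1-\tau}\|Z_k\|_\infty^2$. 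Finally item 3: the sequence $X_k$ is entrywise monotone nondecreasing and bounded above by $G$, so each entry converges; to identify the limit $X_\infty$ as $G$ one passes to the limit in \eqref{eq:ekp1} or uses that the quadratic bound in item 4 forces $\|\E_k\|_\infty\to0$ once $\|\E_{k_0}\|_\infty<1-\tau$ for some $k_0$ — and such a $k_0$ exists because the monotone sequence $X_k$ has a limit which, being a fixed point sandwiched below $G$, must equal $G$ by minimality, giving entrywise convergence $(\E_k)_{i,j}\to0$; strictly, one should note that $X_\infty$ solves $L(X_\infty)=0$ (taking limits in $L(X_{k+1})=A_1Z_k^2\to0$ is the delicate point and needs either dominated convergence on each entry or the a priori norm bound).

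The main obstacle I expect is the interplay between entrywise convergence and norm convergence: Theorem \ref{thm:newt} only claims entrywise convergence in item 3 (not convergence in $\|\cdot\|_\infty$, unlike the fixed point theorems), precisely because $\mathcal L_\infty$ is not separable and monotone bounded sequences need not converge in norm. So the careful point is that the quadratic rate in item 4 is stated for the norm of $\E_{k+1}$ in terms of the norm of $\E_k$, and this estimate is valid for every $k$ regardless of whether $\|\E_k\|_\infty\to0$; the genuinely subtle step is justifying that $X_\infty:=\lim_k X_k$ (entrywise) actually satisfies the matrix equation, since passing to the limit inside the products $A_1X_k^2$ requires controlling $\|X_k\|_\infty\le1$ uniformly (which we have) plus entrywise convergence, via a standard but slightly technical dominated-convergence argument on each row. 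Once $X_\infty=G$ is established by minimality, item 3 follows and items 1, 2, 4 are purely the inductive bookkeeping described above.
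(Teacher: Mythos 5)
Your proposal is correct and follows essentially the same route as the paper: induction on $k$ using \eqref{eq:hk}, \eqref{eq:sk}, \eqref{eq:ah2} and \eqref{eq:ekp1} for items 1--2 and 4 (with the same bounds $\|(I-A_0-A_1X_k)^{-1}\|_\infty\le\|W^{-1}\|_\infty$, $\|S_k\|_\infty\le\|W^{-1}A_1\|_\infty\le\tau$, $\|L(X_k)\|_\infty\le 2$), and entrywise monotone convergence plus minimality of $G$ for item 3. The only difference is cosmetic: you spell out the passage to the limit in item 3 (identifying the entrywise limit as a solution, hence as $G$) more explicitly than the paper's terse argument, which is a welcome clarification rather than a divergence of method.
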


\begin{proof}
We prove properties 1 and 2 by induction on $k$. For $k=0$ we have
$X_0=0$, $Z_0=-(I-A_0)^{-1}A_{-1}\le 0$, $L(X_1)=A_{1}Z_0^2\ge 0$,
$X_1=-Z_0\ge X_0$ and $X_1=(I-A_0)^{-1}A_{-1}\le G$. Moreover, clearly
$\|Z_0\|_\infty\le\beta$. For the inductive step, assume that
properties 1 and 2 are valid for $k$ and prove them for
$k+1$. We show that $\|Z_{k+1}\|\le \beta$. Consider \eqref{eq:sk}.
Since by induction $L(X_{k+1})\ge0$ then $\|L(X_{k+1})\|_\infty=\|L(X_{k+1})\one\|_\infty
\le \|(A_1X_{k+1}^2+A_0X_{k+1}+A_{-1})\one\|_\infty+\|X_{k+1}\one\|_\infty$. Moreover, since 
$X_{k+1}\le G$ then $X_{k+1}\one\le G\one$ so that $\|L(X_{k+1})\|_\infty\le 2$.
From $X_{k+1}\le G$ it follows also $A_0+A_1X_{k+1}\le A_0+A_1G$ so that
$(I-A_0-A_1X_{k+1})^{-1}\le(I-A_0-A_1G)^{-1}=W^{-1}$, whence
$\|(I-A_0-A_1X_{k+1})^{-1}\|_\infty\le \|W^{-1}\|_\infty$. Similarly,
$\|S_k\|_\infty\le\|(I-A_0-A_1G)^{-1}A_1\|_\infty=\|W^{-1}A_1\|_\infty<1$
in view of Lemma
\ref{lem:2}. From \eqref{eq:hk} and \eqref{eq:sk} we get
$\|Z_k\|_\infty\le\beta$.  The property $Z_{k+1}\le 0$ follows from
\eqref{eq:hk} since $S_{k+1} \ge 0$, $(I-A_0-A_1X_{k+1})^{-1}\ge 0$,
and $L(X_{k+1})\ge 0$ by the inductive assumption. The inequality
$L(X_{k+2})\ge 0$ follows from \eqref{eq:ah2} since $Z_{k+1}\le 0$.
Consequently $X_{k+2}=X_{k+1}-Z_{k+1}\ge X_{k+1}$. The property
$X_{k+2}\le G$ follows from \eqref{eq:ekp1} since $\E_{k+1}\ge 0$,
$S_{k+1}\ge 0$ and $X_{k+1}\ge 0$.  Given $i,j$ consider the sequence
$(\E_k)_{i,j}$ for $k=0,1,\ldots$. This sequence is non-increasing and
bounded from below by $0$ therefore it has a limit. The value of the
limit cannot be positive since $G$ is the minimal nonnegative solution
to the matrix equation.  From the representation \eqref{eq:ekp1} of
$\E_{k+1}$, since $\|X_k\|_\infty\le\|G\|_\infty\le 1$, and
$\|S_k\|_\infty\le\|(I-A_0-A_1G)^{-1}A_1\|_\infty\le\tau<1$ in view of
\eqref{eq:sigmatau}, we deduce that
\[
\|\E_{k+1}\|_\infty\le \frac\tau{1-\tau}\|\E_k\|_\infty^2.
\]
Similarly we can do for $\|Z_{k+1}\|_\infty$.
\end{proof}

\section{Perturbation results}\label{sec:perturb}
For the case where the coefficient matrices are finite, Higham and Kim \cite{HK} derived a condition number $\Psi(X)$ for a solvent $X$ of a general quadratic matrix equation of the kind \eqref{eq:G}
namely,
\[
\Psi(X)=\|P^{-1}[\alpha (X^2)^T\otimes I_n, \beta X^T\otimes I_n, \gamma I_n^2]\|_2/\|X\|_F,
\]
where $P=I_n\otimes A_1X+X^T\otimes A_1+I_n\otimes (A_0-I)$ and $\alpha, \beta, \gamma$ are nonnegative parameters.

 However, when the coefficient matrices are semi-infinite, there are cases where $P^{-1}$ does not exist or $\|X\|_F=\infty$, so the definition of $\Psi(X)$ does not apply.
 In this section, we take into account the structure of the coefficient matrices and derive a structured condition number for the minimal nonnegative solution of equations \eqref{eq:G} and \eqref{eq:R}. Without loss of generality we consider only equation \eqref{eq:G}.

Consider the perturbed matrix equation obtained from \eqref{eq:G} by replacing the coefficients $A_i$ by $A_i+\Delta_{A_i}$
where $\Delta_{ A_i}=T(\delta_i)+E_{\delta_i}\in \mathcal{QT},$  $A_i+\Delta_{ A_i}\geq 0$, for  $i=-1,0,1$,  and  $(A_1+\Delta_{ A_1}+A_0+\Delta_{ A_0}+A_{-1}+\Delta_{ A_{-1}})\bf{1}=\bf{1}$. Denote $X+\Delta_X$ a solution of the perturbed equation so that we may write

\begin{equation}\label{qme-perturbed}
\begin{split}
(A_{1}+\Delta_{{A}_{1}})(X+\Delta_ X)^2+&(A_0+\Delta_{ A_0})(X+\Delta_ X)
+
A_{-1}+\Delta_{ A_{-1}}=X+\Delta_ X.
\end{split}\end{equation}

The analysis is separated into two parts, that is, the the analysis of the structured condition number of the Toeplitz part and the analysis of the condition number of the whole matrix. 

\subsection{Toeplitz part}\label{ssec:toep}
In this section we provide a perturbation result for the function $g(z)$ which is the solution of minimum modulus of the scalar equation \eqref{ag}.

For the sake of notational simplicity, we omit the variable $z$ from the symbols, say, we write $g$ in place of $g(z)$ and $a_i$ in place of $a_i(z)$.

Under the assumption that the matrix coefficients $A_i+\Delta_{A_i}$ of equation \eqref{qme-perturbed} still satisfy the condition $A_i+\Delta_{A_i}\ge 0$, $\sum_{i=-1}^1(A_i+\Delta_{A_i})\one=\one$,
for Theorem \ref{thm:A} 
the minimal nonnegative solution of \eqref{qme-perturbed} can be written as $G+\Delta_ G$ , where $\Delta_ G=T(\delta_{g})+E_{\delta_{g}}\in \mathcal{QT}$ and $g+\delta_g$ is the solution of minimum modulus of the equation 
\[
a_{-1}+\delta_{-1}+(a_0+\delta_0)\mu+(a_1+\delta_1)\mu^2=\mu.
\]
Taking the difference of the above equation with  \eqref{ag}, where we set $\mu=g+\delta_g$ and $\lambda=g$, we obtain
\[
\delta_{-1}+\delta_0(g+\delta_g) +\delta_1(g+\delta_g)^2+(a_0-1)\delta_g+a_1((g+\delta_g)^2-g^2)=0.
\] 
Whence, neglecting higher order terms in the perturbations we get
\[
\delta_{-1}+\delta_0g +\delta_1g^2+(a_0-1)\delta_ g+2a_1g\delta_g\doteq 0,
\]
where $\doteq$ means equality up to higher order terms with respect to the perturbations. This yields
\begin{equation}\label{Teo-Per2}
\delta_{g}\z\doteq\frac{\delta_1\z g\z^2+\delta_0\z g\z+\delta_{-1}\z}{1-2a_1\z g\z-a_0\z}.
\end{equation}

Note that $g(z)$, $a_0(z)$ and  $a_1(z)$ have nonnegative coefficients so that 
$\|g\|_{w}=\sum_{i\in \mathbb{Z}}g_i=g(1)=1$, and  
$|2a_1(z)g(z)+a_0(z)|\le 2a_1(1)g(1)+a_0(1)=2a_1(1)+a_0(1)=1-a_{-1}(1)+a_1(1)<1$, due to \eqref{eq:assumption1}. This way, we have    
$\|(1-2a_1g-a_0)^{-1}\|_{w}=(1-2a_1(1)g(1)-a_0(1))^{-1}=(a_{-1}(1)-a_1(1))^{-1}$. Whence, from (\ref{Teo-Per2}), we obtain
\[
\|\delta_{g}\|_{w}\dotle (a_{-1}(1)-a_1(1))^{-1} \|\delta_1g^2+\delta_0g+\delta_{-1}\|_{w},
\]
where $\dotle$ means inequality up to higher order terms with respect to the perturbations. Therefore we arrive at the bound

\begin{equation}\label{Teo-bound}
\|\delta_{g}\|_{w}\dotle \frac{1}{a_{-1}(1)-a_1(1)}\big(\|\delta_1\|_{w}+\|\delta_0\|_{w}+\|\delta_{-1}\|_{w}\big).
\end{equation}

If we measure the  perturbations by
\[
\epsilon={\rm max}\Big\{\frac{\|\delta_i\|_w}{\|a_i\|_w},\ i=-1,0,1\Big\},
\]
 we have $\|\delta_i\|_w\le\epsilon\|a_i\|_w$, moreover, since $\|a_{-1}\|_w+\|a_0\|_w+\|a_1\|_w=a_{-1}(1)+a_0(1)+a_1(1)=1$, the relative variation of the symbol is bounded by
\begin{equation}\label{Toe-cond}
\frac{\|\delta_{g}\|_w}{\|g\|_w}=\|\delta_{g}\|_w\leq\frac{1} {a_{-1}(1)-a_1(1)}\epsilon +O(\epsilon^2).
\end{equation}

It follows from \eqref{Toe-cond} that $\hbox{cond}_{T(g)}:=\frac{1} {a_{-1}(1)-a_1(1)}$ is an upper bound to the condition number of the Toeplitz part of $G$.

\subsection{Whole matrix}\label{ssec:corr}

Expanding \eqref{qme-perturbed}, omitting the second and higher order terms in the perturbations, and setting $X=G$ lead to
\begin{equation}\label{eq:cp}
(I-A_1G-A_0)\Delta_ G-A_1\Delta_ G G \doteq (\Delta_{ A_1}G^2+\Delta_{ A_0}G+\Delta_{ A_{-1}}).
\end{equation}

Now we prove some properties that will be useful to estimate the condition number of the whole solution.

Let $\Delta_A:=\Delta_{ A_1}G^2+\Delta_{ A_0}G+\Delta_{ A_{-1}}$.
According to (\ref{eq:wh}), equation \eqref{eq:cp} can be written as
\begin{equation}\label{W2}
F(\Delta_G)\doteq W^{-1}\Delta_A
\end{equation}
where $F:\mathcal {QT}\rightarrow \mathcal {QT}$ is the map defined by
\[F(Y)=Y-(W^{-1}RW)YG.
\]

Now, we prove that the map $F(Y)$ is invertible in $\mathcal L_\infty$, that is, $F^{-1}$ has bounded infinity norm. 
By Lemma \ref{lem:1}, we have $\|W^{-1}RW\|_\infty\le\gamma<1$ and $\|G\|_\infty= 1$ so that the series  
$\sum_{k=0}^\infty (W^{-1}RW)^kVG^k$ is convergent for any $V\in\mathcal L_\infty$. Therefore, if $V=F(Y)$ then  $Y=\sum_{k=0}^\infty (W^{-1}RW)^kVG^k$.  Thus, we get
\[
\Delta_G=F^{-1}(W^{-1}\Delta_A)=\sum_{k=0}^\infty (W^{-1}RW)^k(W^{-1}\Delta_A)G^k.
\]
Since $\sum_{k=0}^\infty \|W^{-1}RW\|_\infty^k=1/(1-\|W^{-1}RW\|_\infty)$,  taking norms in the above expression and applying Lemma \ref{lem:1} yields
\begin{equation}\label{eq:bound1}
\|\Delta_G\|_\infty\le\frac{\|W^{-1}\|_\infty}{1-\|W^{-1}RW\|_\infty}\|\Delta_A\|_\infty\le\frac1{\theta(1-\gamma)}\|\Delta_A\|_\infty.
\end{equation}

Whence we conclude with the following 

\begin{theorem}If $A_{-1}\one>A_1\one$, then for the perturbation $\Delta_G$ we have
\begin{equation}\label{eq:2.10}\begin{aligned}
\|\Delta_G\|_\infty\le& 
\frac{\|W^{-1}\|_\infty}{1-\|W^{-1}RW\|_\infty}\|\Delta_A\|_\infty\\
\le& \frac1{\theta(1-\gamma)}\|\Delta_A\|_\infty\le \frac1{\theta(1-\gamma)} (\|\Delta_{A_{-1}}\|_\infty+\|\Delta_{A_0}\|_\infty+\|\Delta_{A_1}\|_\infty),
\end{aligned}\end{equation}
where $\theta$ and $\gamma$ are defined in \eqref{eq:gamma} and $\Delta_A=\Delta_{A_1}G^2+\Delta_{A_0}G+\Delta_{A_{-1}}$. 
\end{theorem}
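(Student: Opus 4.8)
The plan is to observe that the two displayed chains of inequalities are, in effect, already assembled in the discussion preceding the statement, so the proof amounts to collecting \eqref{eq:cp}--\eqref{eq:bound1} and appending one elementary estimate. First I would recall that, from the linearized perturbation identity \eqref{eq:cp}, the perturbation $\Delta_G$ satisfies $F(\Delta_G)\doteq W^{-1}\Delta_A$ with $F(Y)=Y-(W^{-1}RW)YG$, as in \eqref{W2}. Since $\mathcal L_\infty$ is a Banach algebra, Lemma \ref{lem:1} gives $\|W^{-1}RW\|_\infty\le\gamma<1$ (this is where the hypothesis $A_{-1}\one>A_1\one$ enters) and Theorem \ref{thm:A} gives $\|G\|_\infty=\|G\one\|_\infty=1$; hence $Y\mapsto(W^{-1}RW)YG$ is a strict contraction on $\mathcal L_\infty$, the series $\sum_{k\ge0}(W^{-1}RW)^kVG^k$ converges for every $V$ and inverts $F$. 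Applying it to $V=W^{-1}\Delta_A$, taking infinity norms and using $\|W^{-1}\|_\infty\le1/\theta$ from Lemma \ref{lem:1}, reproduces \eqref{eq:bound1}, which is precisely the first two inequalities of \eqref{eq:2.10}.

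For the last inequality I would simply bound $\|\Delta_A\|_\infty$ by the triangle inequality and submultiplicativity of $\|\cdot\|_\infty$:
\[
\|\Delta_A\|_\infty=\|\Delta_{A_1}G^2+\Delta_{A_0}G+\Delta_{A_{-1}}\|_\infty\le\|\Delta_{A_1}\|_\infty\|G\|_\infty^2+\|\Delta_{A_0}\|_\infty\|G\|_\infty+\|\Delta_{A_{-1}}\|_\infty,
\]
and, since $\|G\|_\infty=1$, this collapses to $\|\Delta_A\|_\infty\le\|\Delta_{A_1}\|_\infty+\|\Delta_{A_0}\|_\infty+\|\Delta_{A_{-1}}\|_\infty$. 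Chaining this with the previous paragraph yields \eqref{eq:2.10}.

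There is no serious obstacle here; the only points that need a little care — and they are already handled in the text — are that every step is first order in the perturbations, so each inequality is to be read up to higher-order terms in the sense of $\doteq$ and $\dotle$, and that $F^{-1}$ must be produced as a norm-convergent operator series rather than by a finite-dimensional linear-algebra argument, which is legitimate exactly because $\gamma<1$ and $\|G\|_\infty=1$. I would also remark that, because $W$, $R$, $G$ and $\Delta_A$ all lie in $\mathcal{QT}$, each term of the series stays in $\mathcal{QT}$, so $\Delta_G\in\mathcal{QT}$, consistent with the claim made at the start of Section \ref{ssec:corr}.
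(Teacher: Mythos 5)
Your proposal is correct and follows essentially the same route as the paper: the paper's argument is exactly the material preceding the theorem, namely the linearized identity \eqref{eq:cp}, its reformulation \eqref{W2}, inversion of $F$ via the series $\sum_{k\ge0}(W^{-1}RW)^kVG^k$ using $\|W^{-1}RW\|_\infty\le\gamma<1$ and $\|G\|_\infty=1$, and the norm bound \eqref{eq:bound1}, with the final inequality obtained by the triangle inequality and submultiplicativity just as you do.
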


From the above result it turns out that $\|W^{-1}\|_\infty/(1-\|W^{-1}RW\|_\infty)$ is an
estimate of the conditioning of the problem, while $1/(\theta(1-\gamma))$ provides an upper bound.
Since $1-a_{1}(1)/a_{-1}(1)=(a_{-1}(1)-a_1(1))/a_{-1}(1)$, 
we may rewrite the upper bound to the conditioning in the following form which is closer  to the expression obtained for the Toeplitz part of $G$ in Section \ref{ssec:toep}. 
\[
\frac1{\theta(1-\gamma)}= \max\left(\frac {a_{-1}(1)}{a_{-1}(1)-a_1(1)}, \frac {b_{-1}(1)}{b_{-1}(1)-b_1(1)}\right)\frac1{\min(a_{-1}(1),b_{-1}(1))}.
\]

It is interesting to observe that if $a_{-1}(1)\le b_{-1}(1)$ and $\frac{a_{-1}(1)}{b_{-1}(1)}\le\frac{a_1(1)}{b_1(1)}$, which in turn is verified if $a_{-1}(1)\le b_{-1}(1)$ and $b_1(1)\le a_1(1)$, then
\[
\frac1{\theta(1-\gamma)}=\frac1{a_{-1}(1)-a_1(1)},
\]
that is, the conditioning of the Toeplitz part coincides with the conditioning of the whole problem.

It is interesting to point out that, since $RW=A_1$ (see equation \eqref{eq:wh}), the estimate of the condition number  $\|W^{-1}\|_\infty/(1-\|W^{-1}RW\|_\infty)$, appears in the uniform bound $\beta$ to the norm of the Newton correction $Z_k$ introduced in Theorem \ref{thm:newt}. Consequently, if the quadratic matrix equation is well conditioned, the uniform bound to the norm of $Z_k$ is smaller.

\subsection{A simple example}\label{sec:condexp}

This example is taken from Example 6.2 in \cite{Motyer-Taylor},  where a continuous time Markov process modeling a two-node Jackson network is considered. Here, the matrices are modified by means of the uniformization technique \cite{lr:book} in order to represent a discrete-time model. 
In details,
\[
A_{-1}=\alpha\left(
         \begin{array}{cccc}
           (1-q)\mu_2 & q\mu_2 &   &   \\
             & (1-q)\mu_2 & q\mu_2 &   \\
             &   &   \ddots & \ddots  \\
         \end{array}
       \right),
A_1=\alpha\left(
      \begin{array}{ccc}
        \lambda_2 &  &   \\
        p\mu_1 & \lambda_2 & \\
         & \ddots & \ddots \\
      \end{array}
    \right),
\]

\[
A_0=\alpha\left(
      \begin{array}{cccc}
        -(\lambda_1+\lambda_2+\mu_2) & \lambda_1 &  &  \\
        (1-p)\mu_1 & -(\lambda_1+\lambda_2+\mu_1+\mu_2) & \lambda_1 & \\
          & \ddots &\ddots & \ddots \\
      \end{array}
    \right) + I,
\]
where the parameters $\lambda_1, \lambda_2, \mu_1, \mu_2, p, q$ are chosen as in Table \ref{tab:mt} and $\alpha = (\lambda_1+\lambda_2+\mu_1+\mu_2)^{-1}$.
Each case denotes one instance of the two-node Jackson network, formed by two servers and two queues, where
customers arrive at nodes 1 and 2 according to independent Poisson processes with rates $\lambda_1$ 
and $\lambda_2$, respectively. Customers are served according to a first-come-first-served
discipline, service times at nodes 1 and 2 are independent and exponentially
distributed with means $1/\mu_1$, $1/\mu_2$.
After completing service at node 1,
customers enter node 2 with probability $p$ or leave the system with probability $1 - p$, where
$0 < p < 1$.  After completing service at node 2, customers enter node 1
with probability $q$ or leave the system with probability $1 - q$, where $0 < q < 1$.

In \cite{Motyer-Taylor}, 10 cases defined by the parameters given in Table
\ref{tab:mt} are analyzed.
It can be easily  seen that the condition $A_{-1}\one> A_{1}\one$  holds for cases 1, 3, 4, 5, 7, 8, 9, while the same condition holds in the cases 2, 6 and 10 for the flipped problems where phases and levels are exchanged.

\begin{table}\label{tab:mt}\begin{center}
{\footnotesize\begin{tabular}{ccccccc}
Case&$\lambda_1$ &$\lambda_2$ &$\mu_1$ &$\mu_2$ &$p$&$q$\\ \hline
1&    1 &0 &1.5 &2 &1 &0\\
2&    1& 0& 2& 1.5& 1& 0\\
3&    0& 1& 1.5& 2& 0& 1\\
4&    0& 1& 2& 1.5& 0& 1\\
5&    1& 1& 2& 2& 0.1& 0.8\\
6&    1& 1& 2& 2& 0.8& 0.1\\
7&    1& 1& 2& 2& 0.4& 0.4\\
8&    1& 1& 10& 10& 0.5& 0.5\\
9&    1& 5& 10& 15& 0.4& 0.9\\
10&   5& 1& 15& 10& 0.9& 0.4\\
\end{tabular}\caption{Parameters defining the matrices $A_{-1},A_0,A_1$ in the 2-node Jackson network of \cite{Motyer-Taylor}}}\end{center}
\end{table}

For the flipped problem,   the coefficient matrices are
\[
A_{-1}=\alpha\left(
         \begin{array}{cccc}
           (1-p)\mu_1 & p\mu_1 &   &   \\
             & (1-p)\mu_1 & p\mu_1 &   \\
             &   &   \ddots & \ddots  \\
         \end{array}
       \right),
A_1=\alpha\left(
      \begin{array}{ccc}
      \lambda_1 &  &   \\
      q\mu_2& \lambda_1 & \\
      & \ddots & \ddots \\
      \end{array}
    \right),
\]

\[
A_0=\alpha\left(
      \begin{array}{cccc}
        -(\lambda_1+\lambda_2+\mu_1) & \lambda_2 &  &  \\
        (1-q)\mu_2 & -(\lambda_1+\lambda_2+\mu_1+\mu_2) & \lambda_2 & \\
          & \ddots &\ddots & \ddots \\
      \end{array}
    \right) + I.
\]
If we perturb the parameters $\lambda_1, \lambda_2, \mu_1, \mu_2$ by

\[\begin{aligned}
&\tilde{\lambda}_i=\lambda_i(1+\epsilon_i^\lambda)\quad \tilde{\mu}_i=\mu_i(1+\epsilon_i^\mu),\quad
  i=1,2,\\
  & \epsilon_i^\lambda,\epsilon_i^\mu\in[10^{-8},2\cdot 10^{-8}],
\end{aligned}
\]
where the perturbations are randomly chosen,
then we get the perturbed matrices $A_{-1}+\Delta_{ A_{-1}}$, $A_0+\Delta_{ A_0}$ and $A_1+\Delta_{ A_1}$.

Note that $a_{-1}(1)=b_{-1}(1)$, $a_{-1}(1)-a_1(1)<b_{-1}(1)-b_1(1)$
holds true for both the original and the flipped problems, it follows
$\frac{1}{\theta(1-\gamma)}=\frac{1}{a_{-1}(1)-a_{1}(1)}$, that is,
the conditioning of the Toeplitz part coincides with the conditioning
of the whole problem.

We denote by ``Conditioning'' the upper bound on the condition number
for the minimal nonnegative solution $G$, that is,
$\frac{1}{\theta(1-\gamma)}$. Moreover, we denote by $\delta_g$-bound
and $\Delta_G$-bound, respectively, the perturbation bound
\eqref{Teo-bound} on $\|\delta_g\|_w$ and the bound \eqref{eq:2.10} on
$\|\Delta_G\|_{\infty}$. In Table \ref{tab:con}, we report the upper
bound on the condition number for the minimal nonnegative solution $G$
of equation \eqref{eq:G}, and we compare the perturbation bound
\eqref{Teo-bound} on the Toeplitz part of $G$ and the bound on the
solution $G$ with the corresponding perturbation errors.

It can be seen from Table \ref{tab:con} that the upper bound
$\frac{1}{\theta(1-\gamma)}$ can serve as a very good estimate of the
conditioning of the problem.  Inequalities \eqref{Teo-bound} and
\eqref{eq:2.10}
provide very sharp and revealing perturbation bounds
to the Toeplitz part and to the solution $G$ with respect to small
perturbations on the coefficients.

\begin{table}
{\footnotesize
\begin{tabular*}{\textwidth}{c@{\extracolsep{\fill}}cccccccc}
\toprule
 Problem&  Conditioning  & $\|\delta_g\|_w$& $\delta_g$-bound & $\|\Delta_G\|_{\infty}$ & $\Delta_G$-bound\\
\midrule
1                     &9.0000 & 3.0601e-09 & 1.3211e-08 & 2.3583e-09 & 1.9817e-08\\
\hspace*{0.5em}2$^*$  &4.5000 & 1.5565e-09 & 4.8528e-09 & 1.5649e-09 & 4.8528e-09\\
3                     &4.5000 & 2.9937e-09 & 9.3015e-09 & 4.2850e-09 & 2.0256e-08\\
4                     &9.0000 & 4.7978e-09 & 2.0256e-08 & 1.8119e-08 & 2.5845e-09\\
5                     &7.5000 & 5.9099e-09 & 1.2456e-09 & 1.1106e-09 & 5.9334e-09\\
\hspace*{0.5em}$6^*$  &7.5000 & 4.4110e-10 & 8.2441e-09 & 3.8574e-10 & 8.3809e-09\\
7                     &30.0000 & 5.4766e-09& 6.8300e-08 & 5.4809e-09 & 8.1645e-08\\
8                     &5.5000 & 4.9898e-10 & 4.0333e-09 & 5.9838e-10 & 4.9549e-09\\
9                     &5.1667 & 7.7413e-10 & 2.8017e-09 & 7.7413e-10 & 3.1621e-09\\
\ $10^*$              &5.1667 & 6.3154e-10 & 3.8820e-09 & 5.3199e-10 & 4.4169e-09\\
\bottomrule
\end{tabular*}
}
\caption{Conditioning of the matrix equation for the 2-node Jackson network of \cite{Motyer-Taylor}: actual perturbations in the solution and in the Toeplitz part and related upper bounds.}
\label{tab:con}
\end{table}

\section{Computational issues and numerical experiments}\label{sec:exp}

Observe that, since the class $\mathcal {QT}$ is an algebra, then all
the matrices $X_k^{(i)}$ generated by the fixed point iterations of
Section \ref{sec:fxp} belong to $\mathcal{QT}$ and each fixed point
iteration can be easily implemented in Matlab relying on the
CQT-Toolbox of \cite{cqttoolbox}. Concerning Newton iteration, a
crucial role is played by the solution of the Sylvester-like equation
\eqref{eq:sylv}.  In the case where the coefficients have finite size
$n$, this equation can be solved by the Bartels-Stewart algorithm
\cite{bs} in $O(n^3)$ arithmetic operations. The case of infinite size
coefficients is much more complicated. For quasi-Toeplitz matrices,
the problem has been analyzed in \cite{robol} by using rational Krylov
subspaces techniques where it is proved that $Z_k\in\mathcal{QT}$
under suitable assumptions that are satisfied by the condition
$A_{-1}\one>A_1\one$.  In the numerical experiments reported in this
section we have used the implementation of \cite{robol} to solve
\eqref{eq:sylv}.

We have implemented the three fixed point iterations analyzed in
Section \ref{sec:fxp} both in the standard versions \eqref{eq:fixp},
and in the version which separately computes the correction part, see
\eqref{eq:F1.1}, \eqref{eq:F2.1} and \eqref{eq:F3.1}. We also
implemented the computation of the coefficients of $g(z)$ relying on
the evaluation and interpolation strategy at the roots of 1 with
automatic handling of the number of interpolation points described in Algorithm \ref{alg:g}.

For each fixed point iteration, we have considered four different
starting approximations, namely, $X_0=0$, $X_0=I$, $X_0=T(g)$,
$X_0=T(g)+ve_1^T$ where $e_1=(1,0,\ldots)^T$ and $v$ is chosen so that $X_0$ is
row-stochastic.

Since there is not much difference in the performances of the versions
based on computing only the correction and the versions where the whole
matrix is computed, we report only the results concerning the latter versions.

We have compared the three fixed point iterations, with different
choices of $X_0$, to Newton's iteration and to the algorithm of cyclic
reduction (CR) analyzed in \cite{bmmr}, which, in the case of finite
matrices, is the method of choice commonly used in practice. For each
experiment, we report the number of iterations, and CPU time needed to
reach the bound $\|A_1X^2+(A_0-I)X+A_{-1}\|_\infty \le\epsilon $ to
the residual error where $\epsilon=5.0\cdot 10^{-14}$.  We have considered some
test problems modeling real world networks. More precisely, the
``Two-node Jackson network'' of Example 6.2 in \cite{Motyer-Taylor},
reported in Section \ref{sec:condexp}, and the model ``Assistance from an
idle server'' of \cite{Motyer-Taylor} with different choices of the parameters,
together with a general random walk in
the quarter plane where the assigned probabilities have been chosen in
such a way to have long queues in the system.

\subsection{Two-node Jackson network}\label{sec:2nJn}
The model that we consider has been described in Section
\ref{sec:condexp}. Among the 10 problems in the list of Table
\ref{tab:mt}, we report the results of Problem 7 which is the most
ill-conditioned in the list, together with the case obtained with
different values of the parameters $\lambda_i,\mu_i$, $p$ and $q$
which make the matrix $G$ numerically very large so that the
computational effort is substantially large.

Figure \ref{fig:MT7} concerns Problem 7 in the list of
Table \ref{tab:mt}.  The first three graphs report the residual errors
at each step for different values of $X_0$. The fourth graph compares
the residual errors of the three iterations for $X_0=T(g)+v e_1^T$. In Table \ref{tab:MT7} it is reported the CPU time in seconds
(left) together with the number of steps (right) required by the three
iterations to arrive at a residual error at most $5.0\cdot 10^{-14}$. The
computation of the symbol $g(z)$ is very inexpensive since the
coefficients $g_i$ are computed in $0.003$ seconds.  For this problem,
cyclic reduction provides the solution in just 8 steps and in 1.97
seconds, while Newton iteration requires 8 steps but takes a larger
amount of seconds, i.e., 256.8.

\pgfdeclareimage[width=5cm]{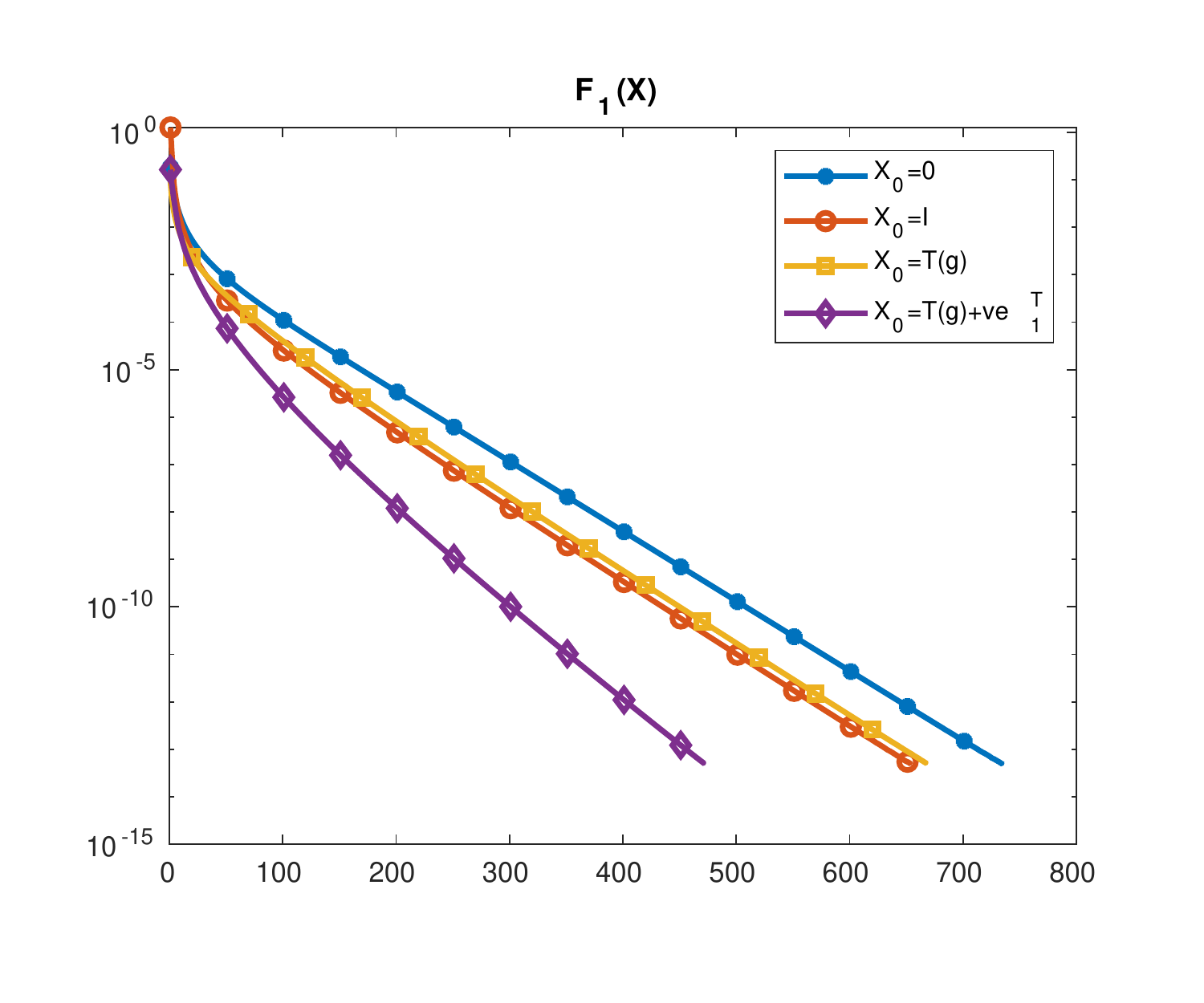}{MT7F1.pdf}  
\pgfdeclareimage[width=5cm]{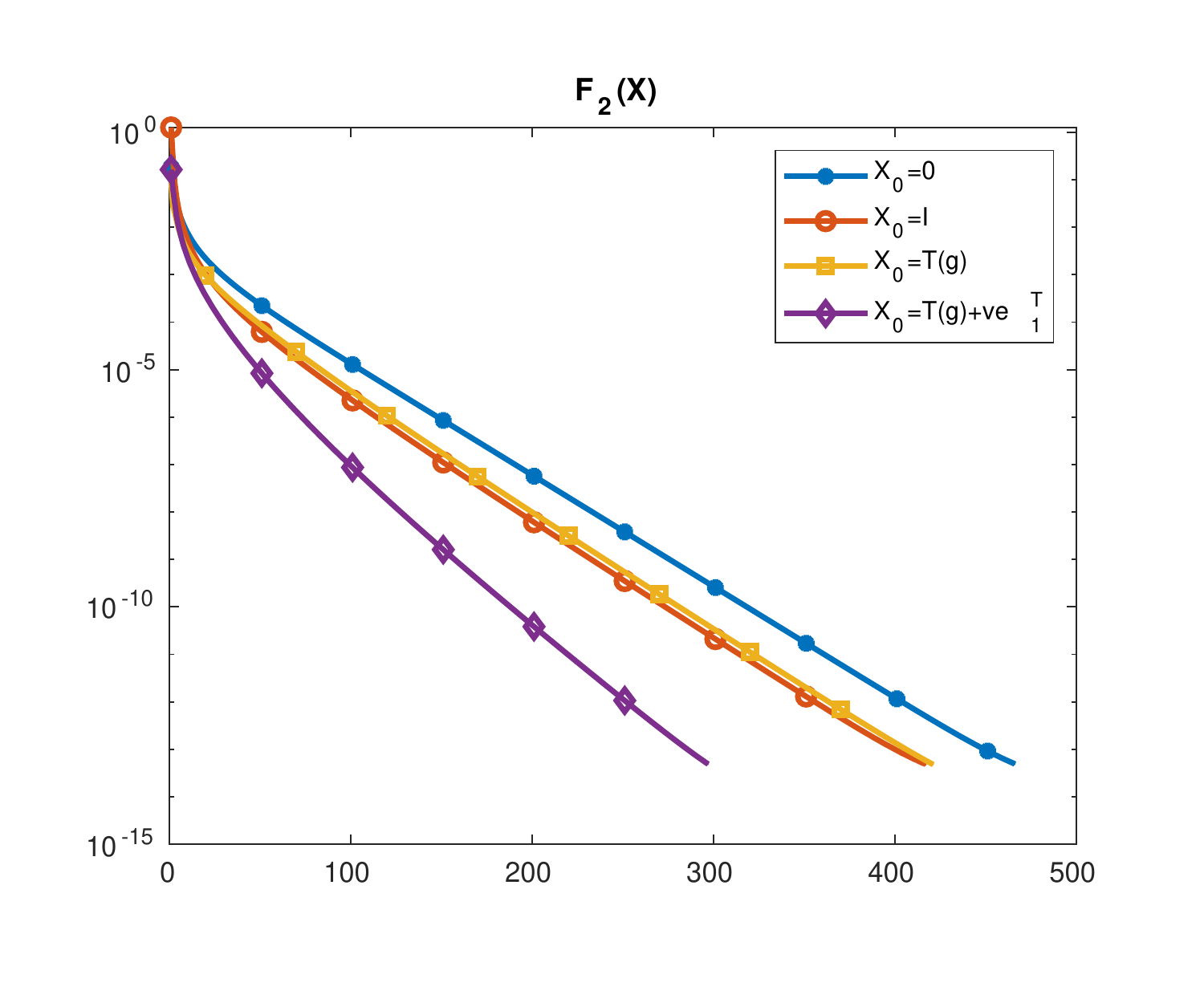}{MT7F2.pdf}
\pgfdeclareimage[width=5cm]{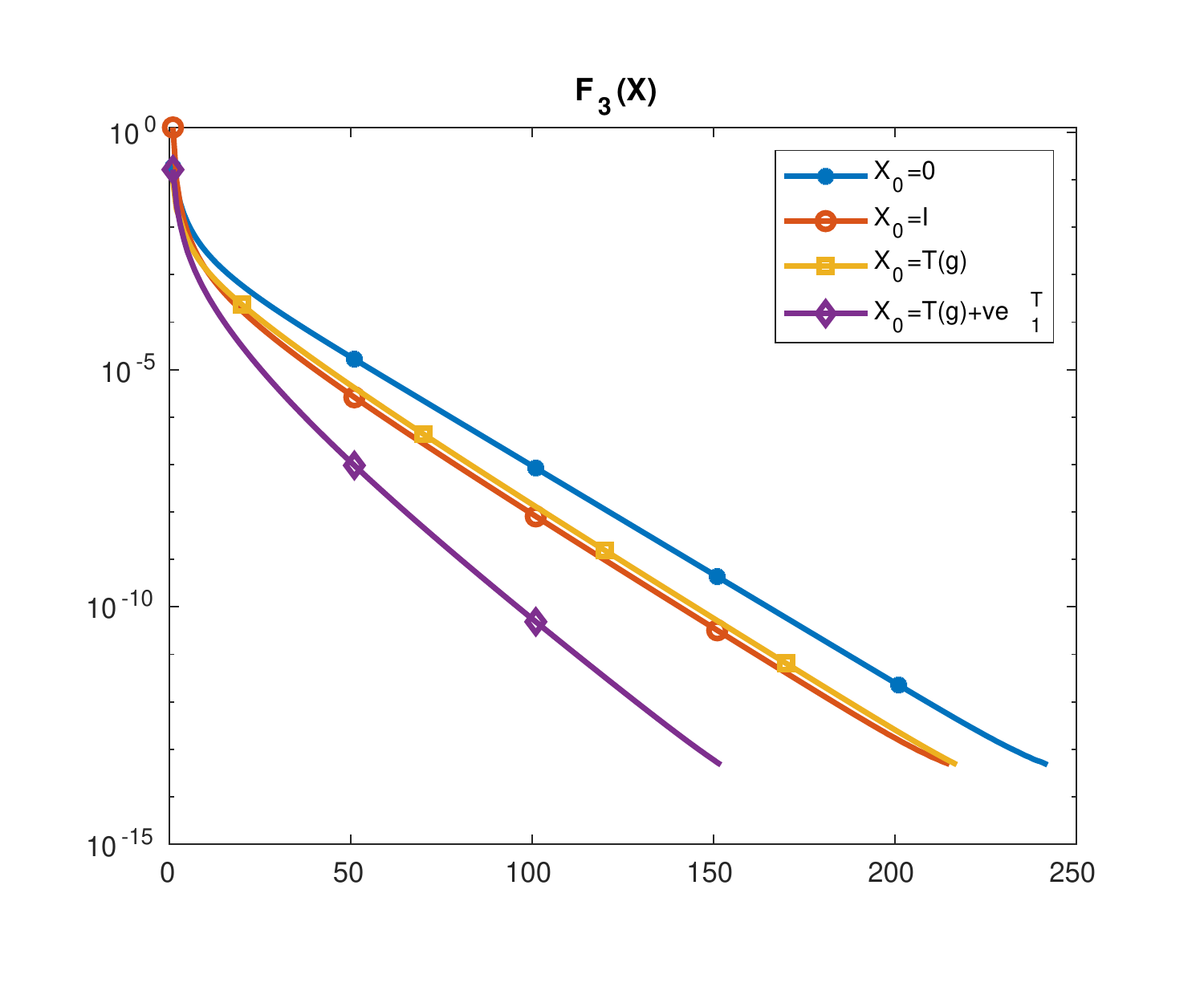}{MT7F3.pdf}
\pgfdeclareimage[width=5cm]{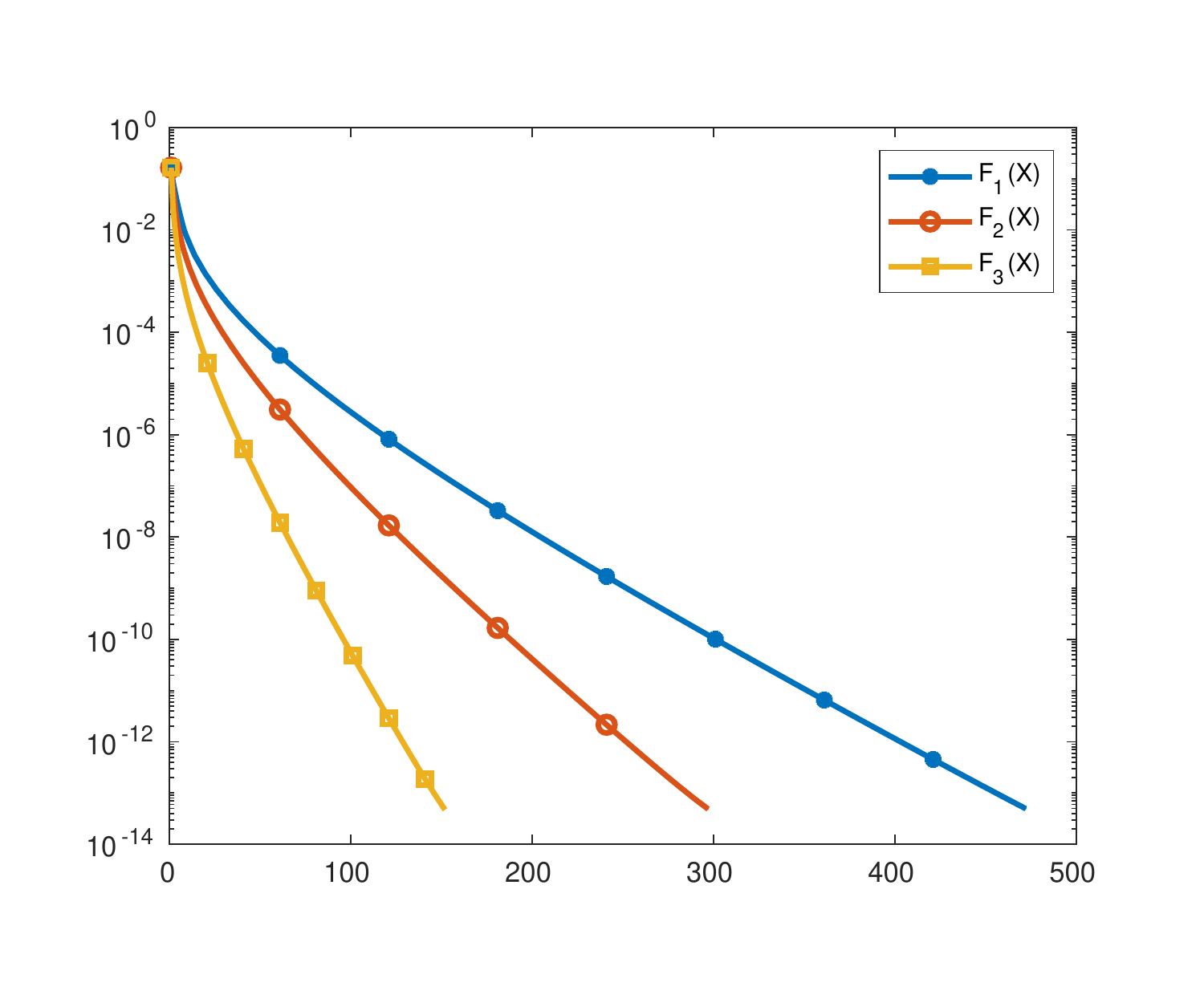}{MT7F123}

\begin{figure}\label{fig:MT7}
  \begin{center}
\begin{tabular}{cc}
\pgfuseimage{MT7F1.pdf}&\pgfuseimage{MT7F2.pdf}\\
\pgfuseimage{MT7F3.pdf}&\pgfuseimage{MT7F123}
\end{tabular}\caption{Two-node Jackson network for Problem $7$ of Table
  \ref{tab:mt}:
  Residual error per step in the three functional iterations $F_1,F_2,F_3$ for different values of the initial matrix $X_0$. In the fourth graph, comparisons of the errors for the three iterations with $X_0=T(g)+v e_1^T$.}
\end{center}
\end{figure}

\begin{table}
  \begin{center}\label{tab:MT7}
    {\footnotesize
    \begin{tabular}{cc}
\begin{tabular}{c|cccc}
&$0$ & $I$ & $T(g)$ & $T(g)+ve_1^T$\\ \hline $F_1$ &  58.6 &  54.7 &
   65.7 &  69.1\\ $F_2$ & 37.2 & 34.8 &  40.0 & 28.9\\ $F_3$ &
   59.5 &  56.2 &  73.3 &  52.2 
\end{tabular}&
\begin{tabular}{c|cccc}
&$0$ & $I$ & $T(g)$ & $T(g)+ve_1^T$\\ \hline
$F_1$ &  735   & 654   & 668   & 472  \\
$F_2$ &  466   & 416   & 421   & 297  \\
$F_3$ &  242    &215    & 217   & 152  
\end{tabular}
    \end{tabular}\caption{Two-node Jackson network for Problem $7$ of Table \ref{tab:mt}: CPU time in
      seconds (left) and number of steps (right) required by the three fixed point
      iteration to arrive at a residual error at most $5.0\cdot 10^{-14}$ starting
      with different values of $X_0$.}
  }\end{center}
  \end{table}

We may observe that in this case CR is the most efficient algorithm
and that the results of Theorems \ref{thm:a1} and \ref{thm:a2} are respected. In
fact the iteration given by $F_3$ with $X_0=T(g)+ve_1^T$ is the
fastest one in terms of number of steps. While concerning the CPU
time, the iteration $F_2$ with $X_0=T(g)+ve_1^T$ is the fastest.

Table \ref{tab:MT11} concerns the case of a Two-node Jackson network
with the choice of parameters given by $\lambda_1=5$, $\lambda_2=
0.7$, $\mu_1= 2$, $\mu_2= 2$, $p= 0.5$ and $q= 0.5$.
In this model,
seen as a random walk in the quarter plane, the overall probability to
move right is higher than the overall probability to move left, while
the probability to move down is higher than the probability to move
up.
%
%
The table reports the CPU time in seconds (left) together
with the number of steps required by the three iterations (right) to
arrive at a residual error at most $5.0\cdot 10^{-14}$. The computation of the
symbol $g(z)$ remains almost inexpensive even though the numerical
length of the coefficient vector of the symbol $g(z)$ is quite large,
in fact the coefficients $g_i$ are computed in less than $0.007$
seconds, and the size of the coefficient vector is $31$ for the coefficients of the negative powers of $z$ and $8424$
for the coefficients of the positive powers, respectively.  The numerical size of the correction is $28\times
6937$.

\begin{table}{\footnotesize
  \begin{center}\label{tab:MT11}
    \begin{tabular}{cc}
\begin{tabular}{c|cccc}
&$0$ & $I$ & $T(g)$ & $T(g)+ve_1^T$\\ \hline 
$F_1$ & 102.8 & 96.0 &  17.5 & 16.0\\
$F_2$ & 49.0 & 46.6 & 9.8 & 9.8\\ 
$F_3$ & 4981.0 & 4502.0 & 997.0 & 913.0   
\end{tabular}&
\begin{tabular}{c|cccc}
&$0$ & $I$ & $T(g)$ & $T(g)+ve_1^T$\\ \hline
$F_1$ &  806 & 738 & 103 & 100\\
$F_2$ &  310 &  285 & 47 &  46\\
$F_3$ &  169  & 149 & 37 &  35  
\end{tabular}
    \end{tabular}\caption{Two-node Jackson network for $\lambda_1=5$,
      $\lambda_2=0.7$, $\mu_1=2 $, $\mu_2=2 $, $p=0.5 $, $q=0.5$:
      CPU time in seconds (left) and number of steps (right) required by
      the three fixed point iteration to arrive at a residual error
      at most $5.0\cdot 10^{-14}$ starting with different values of $X_0$.
      Cyclic reduction requires 8 steps for the overall CPU time of 70.8
      seconds.}
  \end{center}}
  \end{table}

For this problem, cyclic reduction provides the solution in 8 steps
and in $70.8$ seconds, while Newton iteration requires 8 steps and
takes 782 seconds.  In this case, due to the large size of the
matrices involved in the computation of matrix inverses, CR takes a
much larger time than the simple functional iterations given by $F_1$
and $F_2$ which either do not involve inversion or require just only
one matrix inversion. While iteration given by $F_2$ takes less than
10 seconds, the iteration given by $F_3$, even though  is the
fastest in terms of number of steps, needs a large CPU time. In fact,
similarly to CR, it requires a matrix inversion at each step which
becomes more expensive as the approximation approaches the
limit. Newton iteration has the same convergence features as CR,
however, the larger cost of solving a Sylvester equation makes this
iteration much slower than the other ones at least for this problem.

In this model, increasing the values of $\lambda_1$ makes  the
size of the output much larger.  In particular, with values $\lambda_1\ge 6$,
cyclic reduction breaks down for memory overflow while $F_2(X)$
provides the solution with a slight increase of the CPU time.

Figure \ref{fig:MT11G} provides the solution $G$ in log scale where
the Toeplitz part and the correction parts are separately represented.

\pgfdeclareimage[width=5cm]{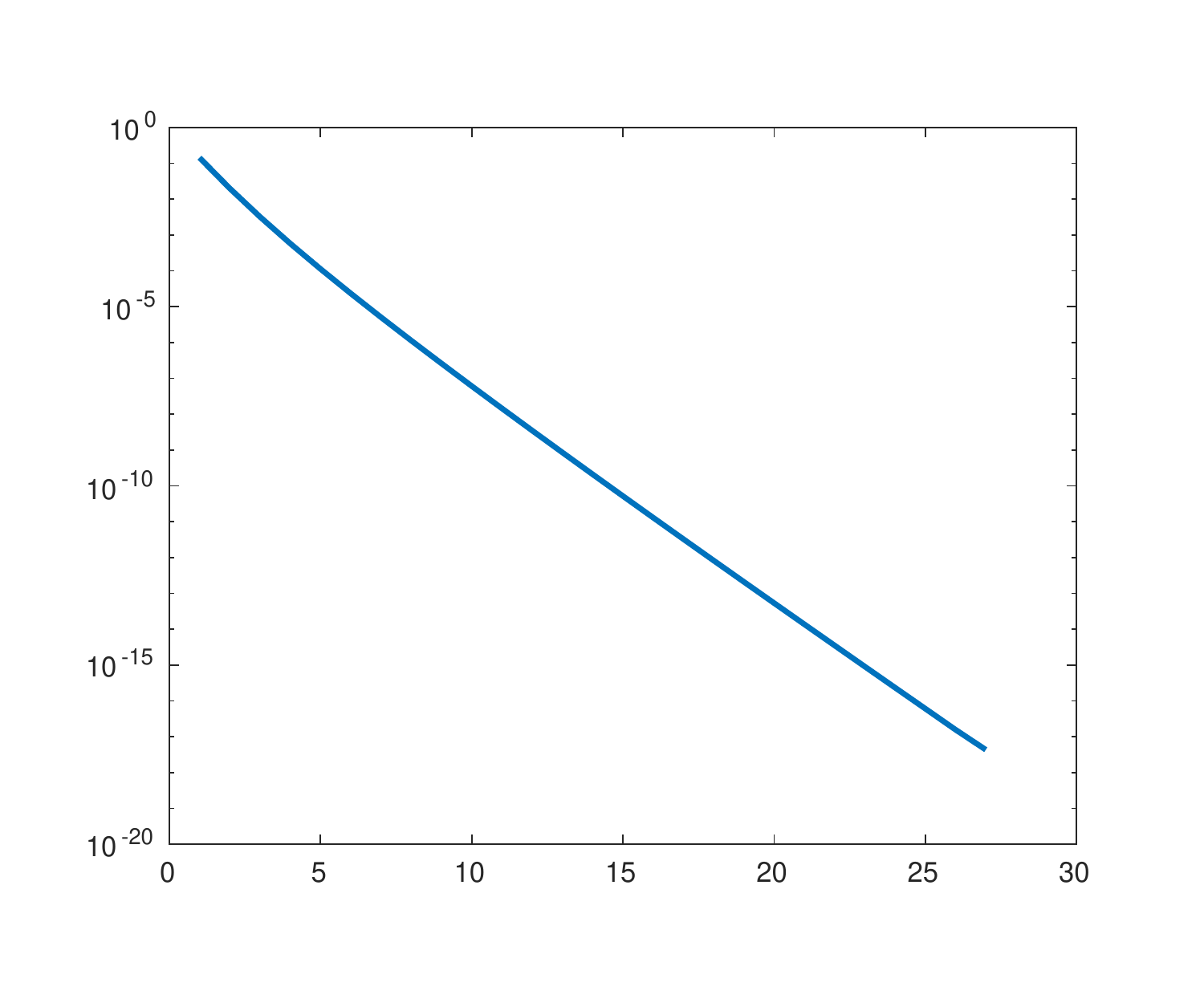}{MT11gm.pdf}
\pgfdeclareimage[width=5cm]{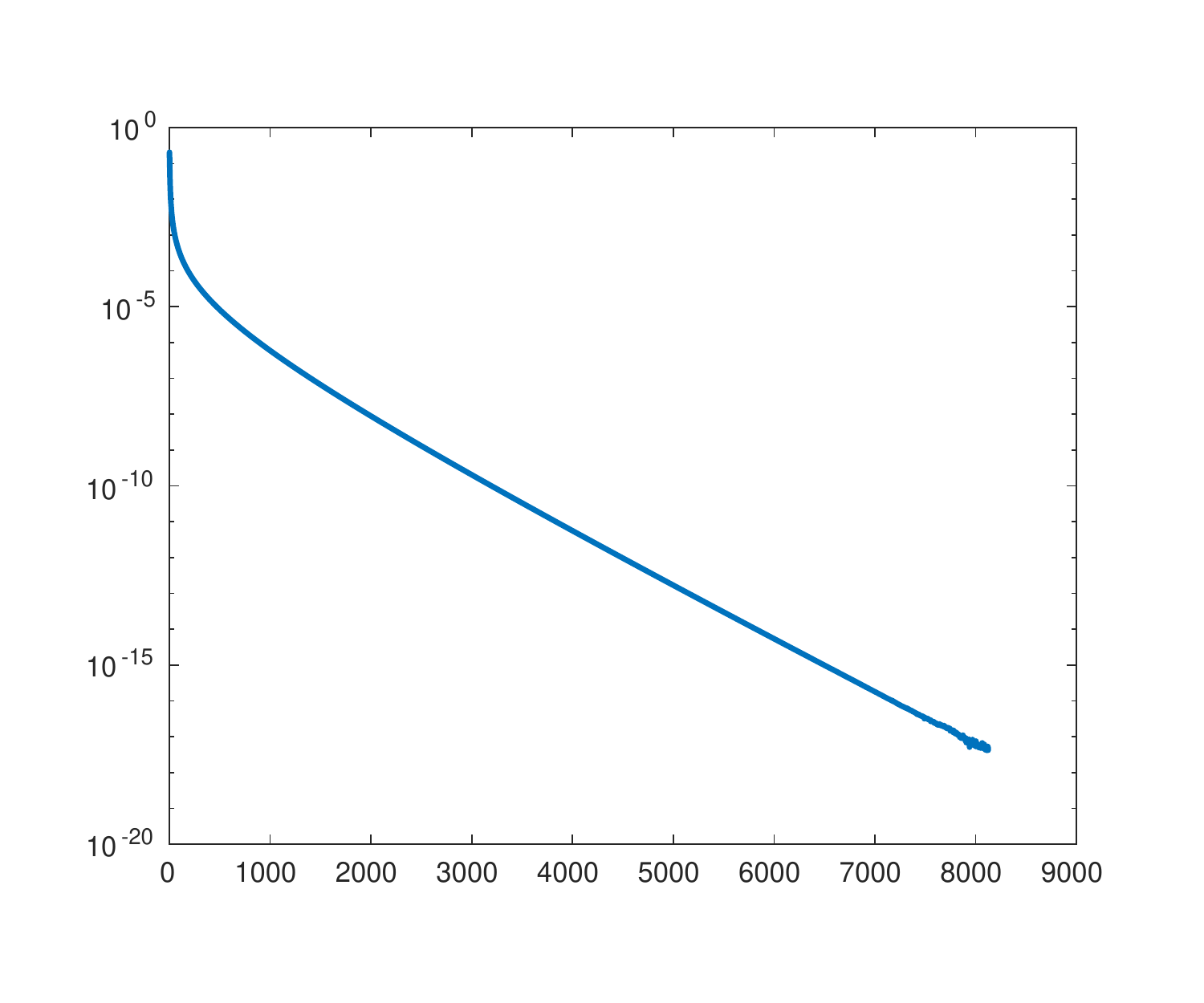}{MT11gp.pdf}
\pgfdeclareimage[width=6cm]{MT11Gc1.png}{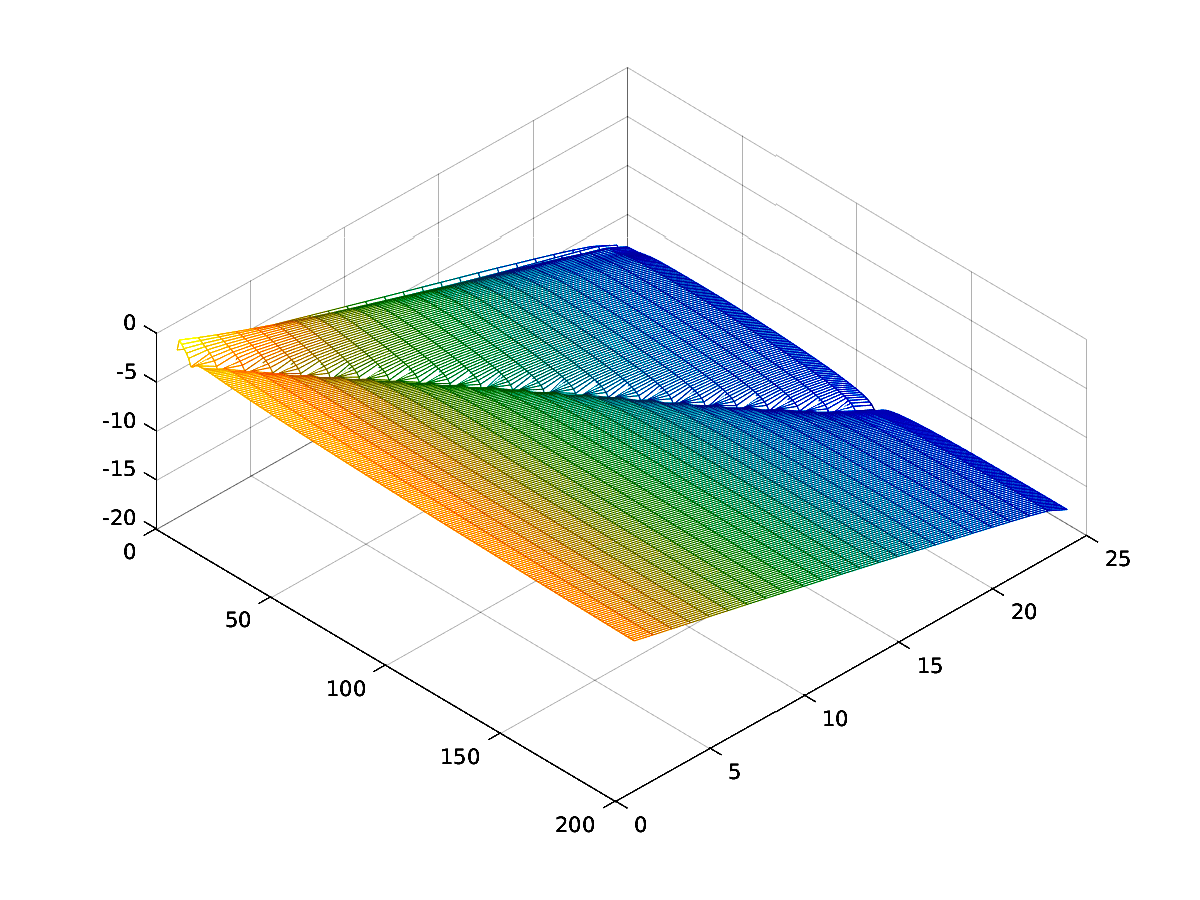}
\pgfdeclareimage[width=6cm]{MT11Gc2.png}{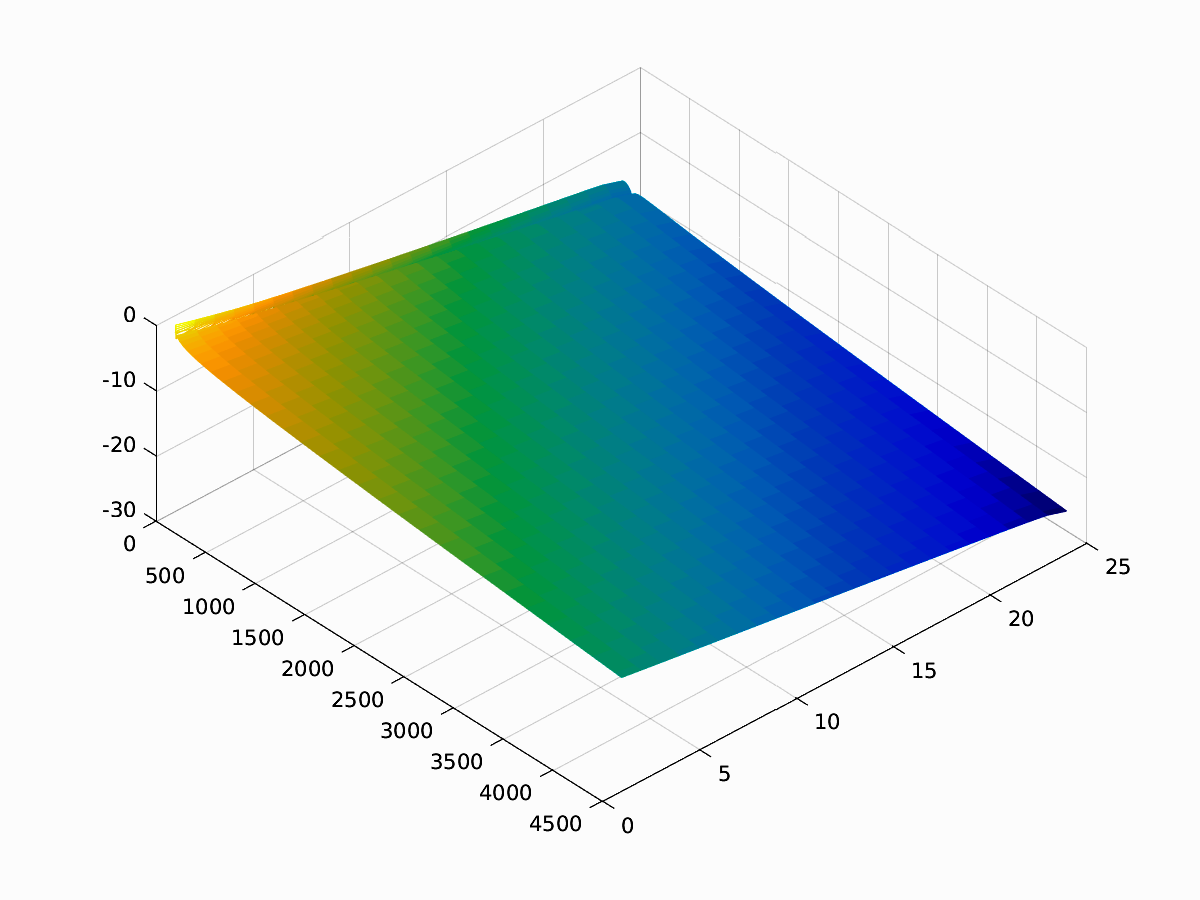}

\begin{figure}\label{fig:MT11G}
  \begin{center}
\begin{tabular}{cc}
  \pgfuseimage{MT11gm.pdf}&\pgfuseimage{MT11gp.pdf}\\
    \pgfuseimage{MT11Gc2.png}&\pgfuseimage{MT11Gc1.png}\\
\end{tabular}
  \end{center}\caption{Two-node Jackson network for $\lambda_1=5$,
    $\lambda_2=0.7$, $\mu_1=2 $, $\mu_2=2 $, $p=0.5 $, $q=0.5$:
    Solution $G$. In the upper part, the log-scale graph of the
    coefficients $g_i$ of the symbol for $i\le 0$ (left) and for
    $i\ge 0$ (right).
    In the lower part the log-scale graph of the absolute value of the whole correction
    (left) together with a zoom (right).}
  \end{figure}

\subsection{Assistance from idle server}
Here we consider a class of queueing models for a system with two
servers and two queues.  Arrivals to queues 1 and 2 occur as
independent Poisson processes with parameters $\lambda_1$ and
$\lambda_2$, respectively. The service times of servers 1 and 2 are
exponentially distributed with parameters $\mu_1$ and $\mu_2$
respectively.  Each server serves its own queue according to a
first-come-first-served discipline. If one of the queues is empty, the
server for that queue assists the other server, doubling the latter's
service rate. If there is an arrival to a queue while its server is
assisting the other queue, the server immediately ceases assisting and
serves its own queue.  This stochastic process is ergodic if and only if
$\rho_1 + \rho_2 < 2$, where $\rho_i=\lambda_i/\mu_i$, $i=1,2$.

For this model, the matrices $A_{-1},A_0,A_1$ are given by
$A_{-1}=\hbox{diag}(2\mu_1,\mu_1,\mu_1, \ldots)$,
$A_0=\hbox{trid}(\mu_2,-\lambda_1-\lambda_2-\mu_1-\mu_2,\lambda_2)+
(\mu_2-\mu_1)e_1e_1^T$, $A_1=\lambda_1 I$.

In this example, we have chosen the values of the parameters in order that
the numerical size of the matrix $G$ is substantially large, namely,
$\lambda_1=0.01 $, $\lambda_2=2.9$, $\mu_1=0.03$, $\mu_2=2.0$.

The situation is analogous to that of the second example in Section
\ref{sec:2nJn} where the methods based on functional iterations
perform better than cyclic reduction and Newton iteration. The graphs
in Figure \ref{fig:idle} and the values in Table \ref{tab:idle}
synthesize the behavior of the algorithms. Cyclic reduction takes about 17 seconds of CPU while Newton iteration about 600 seconds. Slightly increasing the value of $\lambda_1$, CR breaks down for memory overflow while functional iterations still compute correctly the solution $G$.

\pgfdeclareimage[width=5cm]{IDLF1}{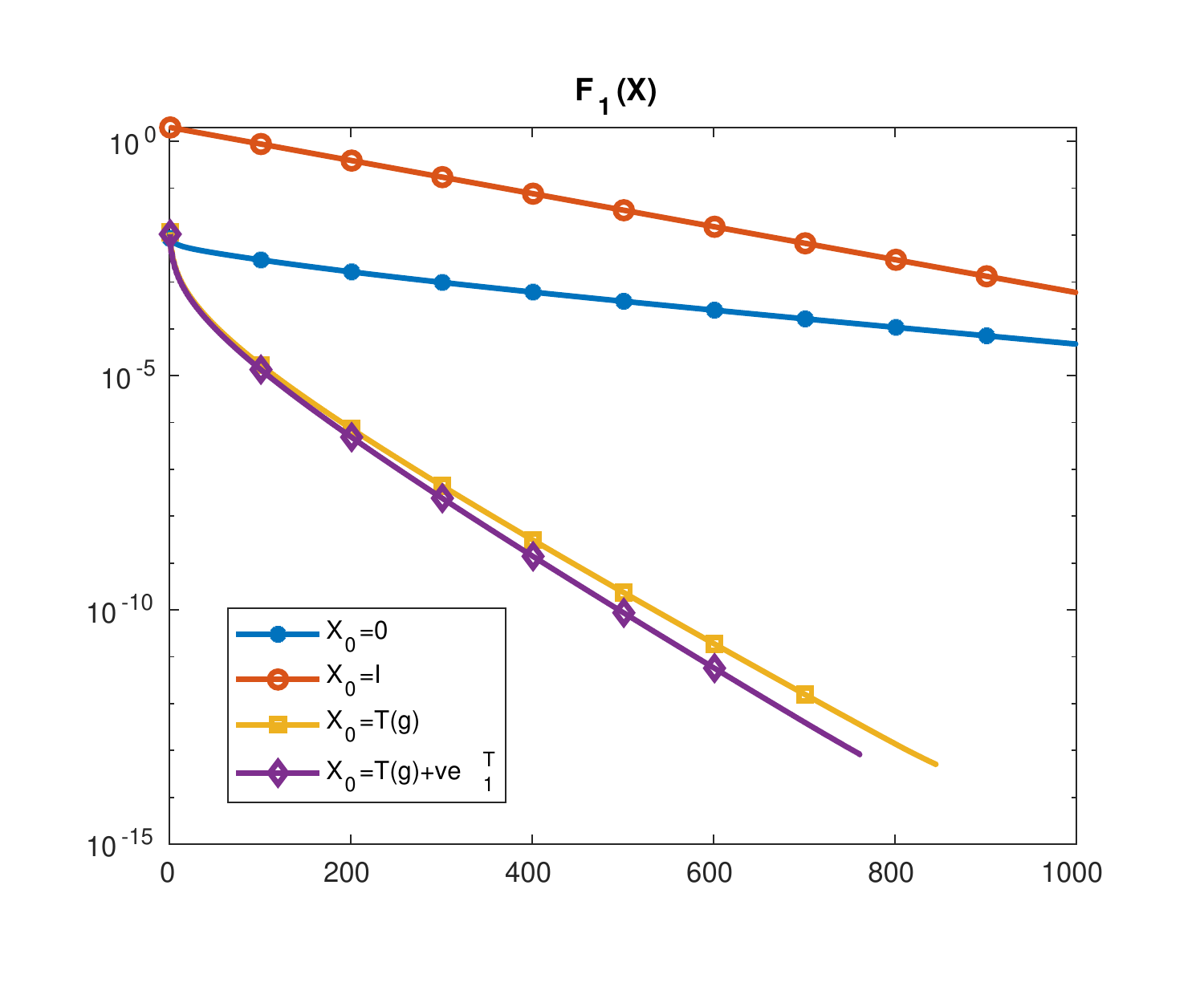}
\pgfdeclareimage[width=5cm]{IDLF2}{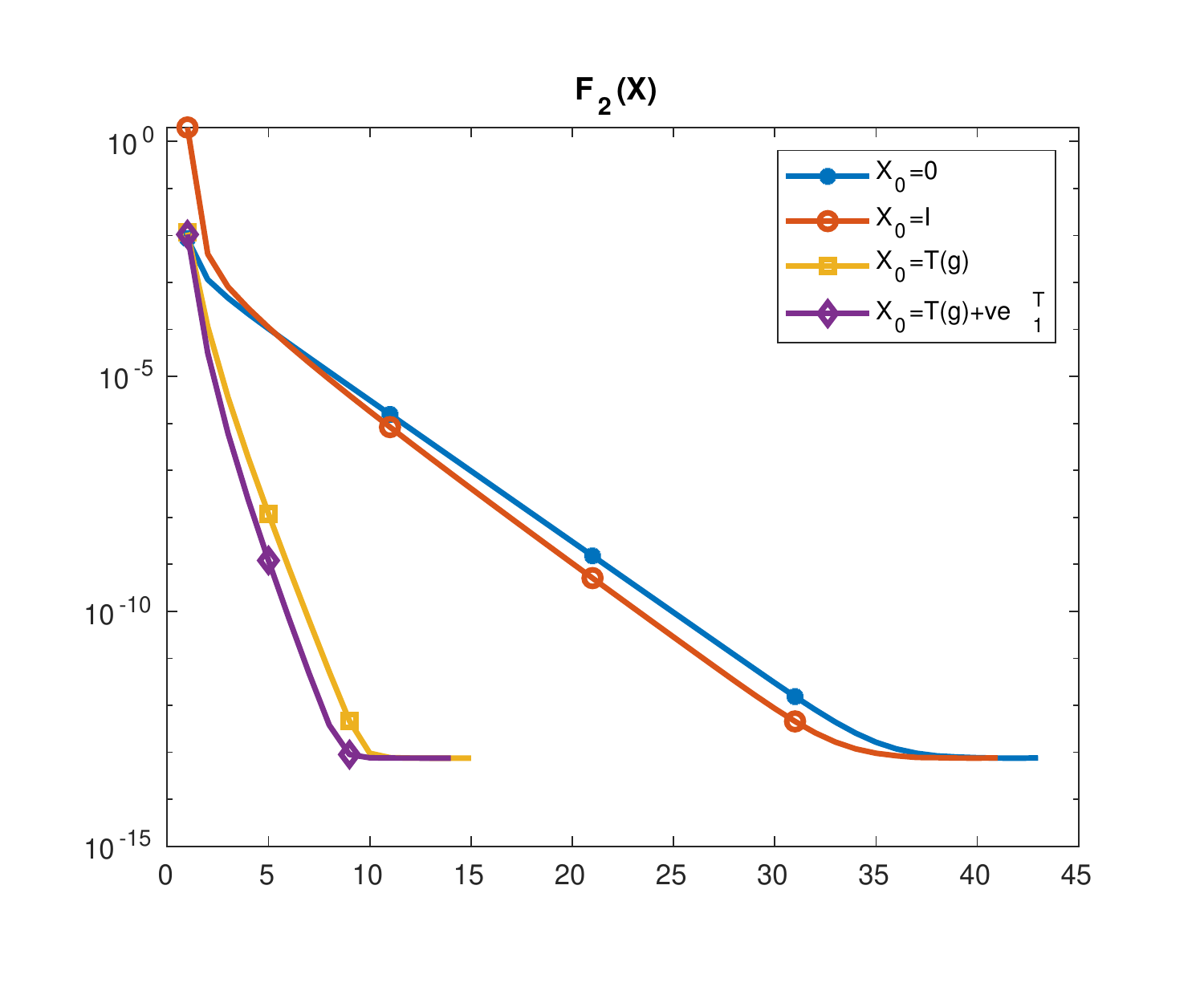}
\pgfdeclareimage[width=5cm]{IDLF3}{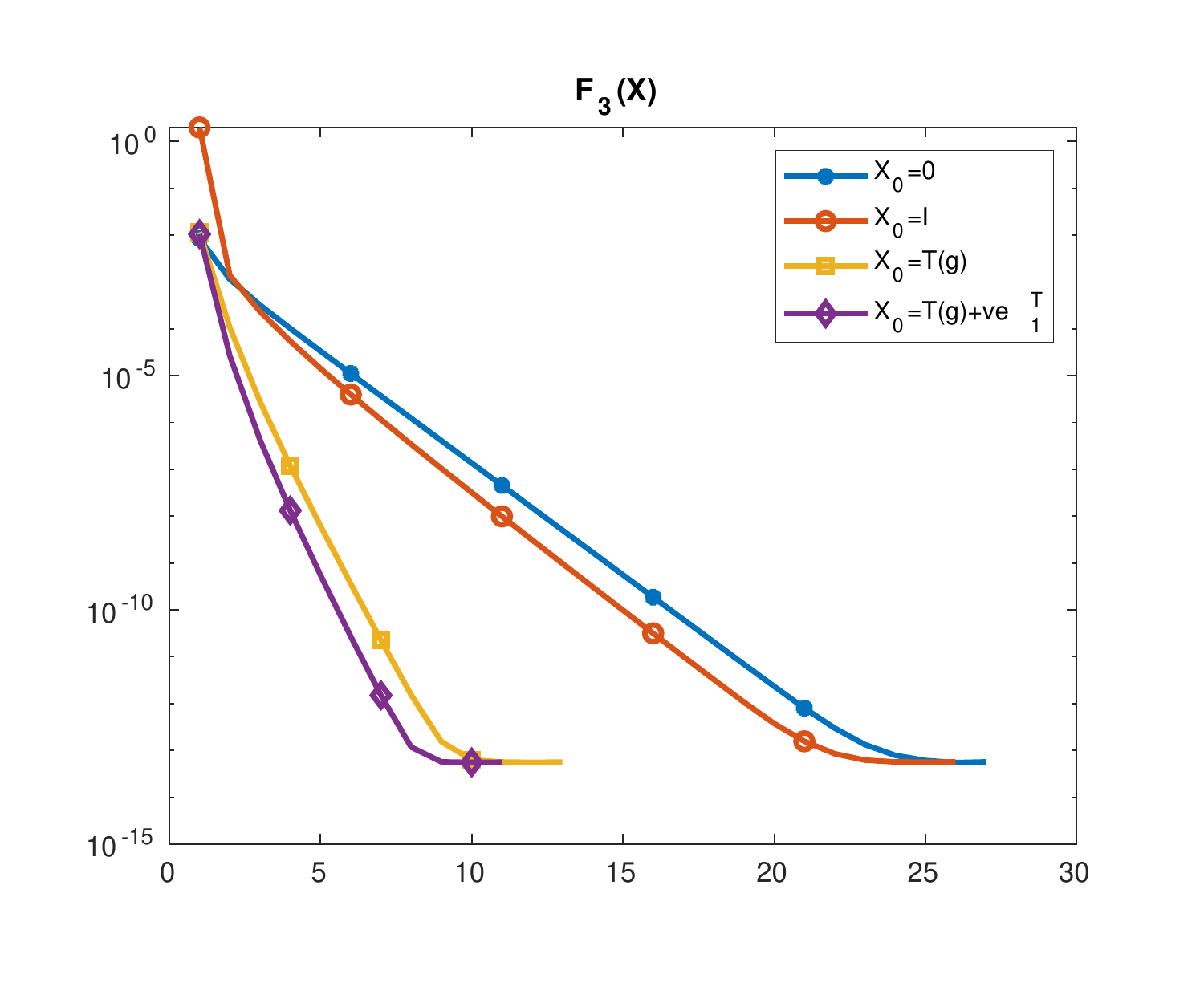}
\pgfdeclareimage[width=5cm]{IDLF123}{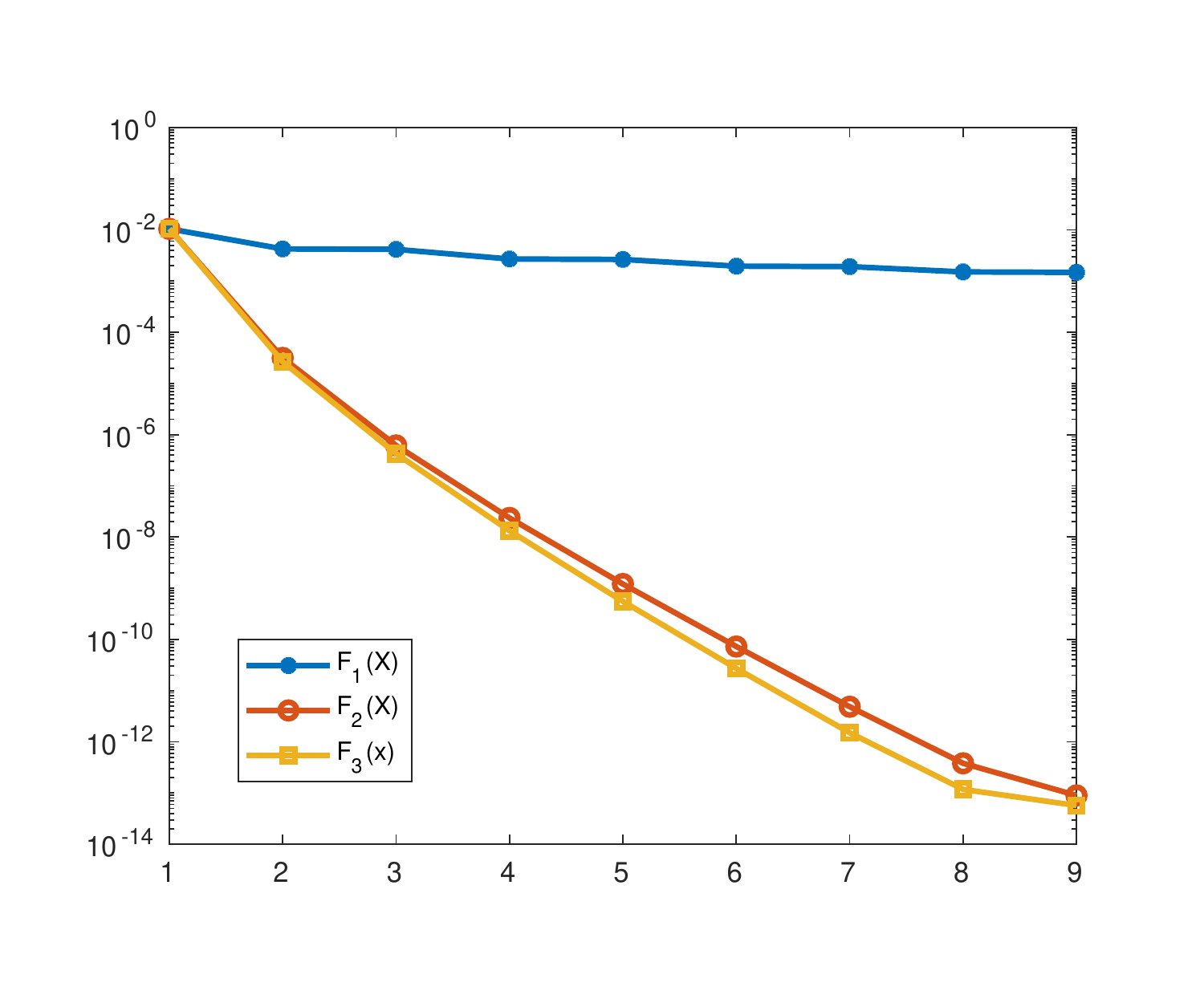}

\begin{figure}\label{fig:idle}
  \begin{center}
\begin{tabular}{cc}
\pgfuseimage{IDLF1}&\pgfuseimage{IDLF2}\\
\pgfuseimage{IDLF3}&\pgfuseimage{IDLF123}
\end{tabular}\caption{Assistance from idle server model from \cite{Motyer-Taylor} for $\lambda_1=0.01$,
  $\lambda_2=2.9$, $\mu_1=0.03 $, $\mu_2=2.0 $:
  Residual error per step in the three functional iterations
  $F1,F_2,F_3$ for different values of the initial matrix $X_0$.
  In the fourth graph, comparisons of the errors for the three
  iterations with $X_0=T(g)+ve_1^T$.}
\end{center}
\end{figure}

\begin{table}
  \begin{center}\label{tab:idle}
  {\footnotesize
    \begin{tabular}{cc}
\begin{tabular}{c|cccc} 
&$0$ & $I$ & $T(g)$ & $T(g)+ve_1^T$\\ \hline 
$F_1$ & * & * &204.8   & 191.6\\ 
$F_2$ & 16.4 & 13.7 & 4.6 & 3.9\\ 
$F_3$ & 405.1 & 402.6 & 218.3 & 183.6
\end{tabular}&
\begin{tabular}{c|cccc}  
&$0$ & $I$ & $T(g)$ & $T(g)+ve_1^T$\\ \hline
$F_1$ &  * & * & 844 & 782\\
$F_2$ &  42 &  40 & 10 & 9 \\
$F_3$ &  26 &  25 & 9 & 7
\end{tabular}
    \end{tabular}\caption{Assistance from idle server model from \cite{Motyer-Taylor} for $\lambda_1=0.01$,
  $\lambda_2=2.9$, $\mu_1=0.03 $, $\mu_2=2.0 $:
      CPU time in seconds (left) and number of steps (right) required by
      the three fixed point iterations to arrive at a residual error
      at most $5.0\cdot 10^{-14}$ starting with different values of $X_0$. A ``$*$'' denotes a number of steps greater than 1000. Cyclic reduction requires 5 steps and 17 seconds of CPU time.}
  }\end{center}
\end{table}



\subsection{Random walk in the quarter plane}
Here we consider an example where the condition $A_{-1}\one>A_1\one$
is  satisfied everywhere exept in the first component. The example describes a
random walk in the quarter plane where a particle can occupy positions
in a grid and we know the probabilities that the particle moves to the
neighboring positions.  In order to better describe the test problem, we
denote $H=(h_{i,j})_{i,j=-1,1}$ the matrix with the probabilities of
transition in the inner part of the quarter plane, while we denote
$Y=(y_{i,j})$ the $3\times 2$ matrix with the probabilities of
transition in the $y$ axis. These two matrices fully describe the
coefficients $A_i$ which can be written as $A_i=T(a_i)+E_i$ where
$a_i(z)=\sum_{j=-1}^1 h_{i,j}z^j$ and $E_i=e_1[y_{i,0}-h_{i,0},
  y_{i,1}-h_{i,1}, 0,\ldots]$, compare with \eqref{eq:ab}.

 The random walk of this example is obtained with the values
\[
H=\frac19\begin{bmatrix}1 & 0& 1\\ 2& 0& 0\\ 2& 2&1
\end{bmatrix},\quad Y=\frac1{3}\begin{bmatrix}
1&1\\ 0&1\\0&0
\end{bmatrix}
\]
so that the condition $A_{-1}\one>A_1\one$ is satisfied in all the components but the first.

For this problem, there exist two nonnegative solutions $G$ and
$\widehat G$ to equation \eqref{eq:G} which satisfy the inequality
$G\le\widehat G$. Moreover $\widehat G$ is stochastic while $G$ is
substochastic. The two solutions have the same symbol $g(z)$ and differ only for the correction part.
Starting with $X_0=0$ or with $X_0=T(g)$ the sequences
generated by the functional iterations converge to $G$. Starting with $X_0=I$ or $X_0=T(g)+ve_1^T$ the sequences converge to $\widehat
G$, while CR and Newton iteration converge to $G$.

The results of this test are summarized in Table \ref{tab:last}. Both CR and Newton iteration take 23 steps in order to arrive at
numerical convergence. However CR takes more than 8  minutes of CPU time
while Newton iteration just 3.4 seconds. The large amount of CPU time
taken by CR is due to the fact that the inverse matrices involved at
each step of CR are QT matrices with a correction having a size which
increases step after step and reaches values larger than $10^6$, while
Newton iteration involves QT matrices with corrections having almost
the same size of the correction of $G$ which is $126\times 36$.
The growth of the sizes of the correction matrices in the algorithms
is an issue which deserves further analysis.

Functional iterations $F_1,F_2$ and $F_3$ with $X_0=0$ or with
$X_0=T(g)$ take a large number of steps to converge numerically to $G$
while with $X_0=I$ or $X_0=T(g)+ve_1^T$ the number of iterations is
much smaller but the limit of the sequences is the stochastic solution 
$\widehat G$ which is not the minimal one.

For this problem, the combination of few steps of CR followed by few steps of Newton iteration provide a substantial acceleration in terms of CPU time.
In fact, the first iterations of CR, involving matrices of small size, have a low cost. The last few steps of CR, which have a much higher cost, are replaced by Newton steps.

\begin{table}\label{tab:last}
  \begin{center}
  \begin{tabular}{c|ccc|ccc|ccc}
      &$F_1$&$F_2$&$F_3$&$F_1$&$F_2$&$F_3$&CR&Newton&CR+Newton\\ \hline
      iter& *& *&*& 285&205&119&23& 23&15+10\\
      CPU & *& *&*& 3.1&2.3&2.6& 524&3.4 &0.4+1.5\\
  \end{tabular}\caption{Random walk in the quarter plane: Number of iterations and CPU time in seconds. From left to right: fixed point iterations with $X_0=T(g)$, fixed point iterations with $X_0=T(g)+ve_1^T$, cyclic reduction, Newton iteration with $X_0=0$, combination of cyclic reduction and Newton iteration. A 
``$*$'' denotes more than 10000 iterations and a CPU larger than 1000 seconds. Starting the iterations with $X_0=T(g)+ve_1^T$ generates sequences converging to the stochastic solution $\widehat G$, while starting with $X_0=0$ or applying CR, Newton iteration and their combinations generate sequences converging to the minimal (substochastic) solution $G$.}
\end{center}
\end{table}

\section{Conclusions}
We have analyzed quadratic matrix equations encountered in the solution of random walk in the quarter plane where the solution of interest is the minimal nonnegative solution $G$. This class of equations is characterized by matrix coefficients with infinite size which belong to the class $\mathcal{QT}$ of Quasi-Toeplitz matrices. We have  provided a perturbation analysis of $G$, introduced some fixed point algorithms for computing $G$ and compared their convergence speed. The algorithms rely on the properties of $\mathcal{QT}$  matrices recently investigated in \cite{cqttoolbox}. 
Numerical experiments show that in many cases the CPU time and the memory resources required by our approach are significantly inferior to the ones required by the algorithm of cyclic reduction, which is considered as the algorithm of choice for this class of problems. The effectiveness of Newton iteration depends on the growth of the sizes of the correction part in the QT matrices generated by the algorithm.


\end{document}